\theoremstyle{plain}
\newtheorem{teo}{Theorem}
\newtheorem{prop}[teo]{Proposition}
\theoremstyle{definition}
\newtheorem{ese}[teo]{Example}
\theoremstyle{remark}
\newtheorem{oss}[teo]{Remark}
\newcommand{\rp}[1]{\ensuremath{\mathbb{RP}^{#1}}}
\newcommand{\s}[1]{\ensuremath{\mathbf{S}^{#1}}}
\begin{document}

\title{Equivalence of two diagram representations of links in lens spaces and essential invariants
\footnote{Work performed under the auspices of G.N.S.A.G.A.
of C.N.R. of Italy and supported by M.U.R.S.T., by the University
of Bologna, funds for selected research topics.}}

\author{Alessia Cattabriga, Enrico Manfredi, Lorenzo Rigolli}

\maketitle

\begin{abstract}

In this paper we study the relation between two diagrammatic representations of links in lens spaces: the disk diagram  introduced in \cite{CMM}
and the grid diagram introduced in \cite{BGH,Co} and we find how to pass from one to the other.   We also investigate whether 
the HOMFLY-PT invariant and  the Link Floer Homology are essential invariants,  that is, we try to understand if these invariants are able to distinguish 
links in $L(p,q)$  covered by the same link in $\s3$. In order to do so, we generalize the combinatorial definition of Knot Floer Homology 
in lens spaces developed   in \cite{BGH,MOS} 
to the case of links and we analyze how both the invariants change when we switch the orientation of the link.  
\\ {{\it Mathematics Subject
Classification 2010:} Primary  57R58, 57M27; Secondary  57M25.\\
{\it Keywords:} knots/links, lens spaces, lift, grid diagram, HOMFLY-PT polynomial, Link Floer Homology.}\\

\end{abstract}

\begin{section}{Introduction}

For many years the study of knots and links has been confined to the case of $\s3$, where different combinatorial 
representations  as well as  powerful invariants were developed in order to study the equivalence 
problem. In the last ten years, as far as the knowledge  on  3-manifolds was  improving, knot theory has shifted also to manifolds different from $\s3$. 
In this setting, lens spaces play a leading role for many different reasons. For example some knots conjectures in $\s3$
can be rephrased in terms of links in lens spaces, as, for example, the Berge conjecture  (see \cite{Berge, Berge2, Gr}).
Furthermore there are interesting articles explaining applications of knots in lens spaces outside mathematics: \cite{Ste} 
exploits them to describe topological string theories and \cite{BM} uses them to describe the resolution of a biological DNA recombination problem. 
Another fundamental reason is that, among three manifolds, 
lens spaces are quite well understood. They are defined as finite cyclic quotient  of $\s3$, but they admit
many different (combinatorial) representation that have been extended to represent also the links contained inside them.  
In \cite{La,LR1,LR2} Dehn surgery representation of lens spaces is used to construct mixed link diagram, while in \cite{CM}  the representation of lens spaces 
as genus one Heegaard splitting  leads to an algebraic representation of  links in lens spaces 
using the elements of the mapping class group. The same representation of lens spaces is used in \cite{BGH} to generalize to links in lens spaces
the notion of grid diagram  introduced in \cite{Bru,Cro,Din} for the 3-sphere case, and used in \cite{MOS} to describe a combinatorial version of 
the Link Floer Homology. Exploiting this representation, the authors manage to extend  Knot Floer Homology
to lens space, whereas in  \cite{Co} a HOMFLY-PT invariant is constructed. 
A disk diagram representation as well as Reidemeister type moves are introduced in \cite{Dr,CMM}  looking at lens spaces as the result of pasting a 3-ball along its 
boundary. Using this diagram,  in \cite{Dr, Mr2} a Jones type polynomial and a HOMFLY and Kauffman skein modules are constructed for the case of 
$L(2,1)=\mathbb{RP}^3$. This diagram is generalized to all lens spaces in \cite{CMM}, where the authors use it to compute the 
fundamental group as well as the twisted Alexander polynomial.   
As far as so many invariant have been extended to 
links in lens spaces, a natural question arising is the following: which of them is able to distinguish different links
in a certain lens space covered by the same link in $\s3$? Such an invariant is called \textit{essential}. In \cite{M, M2} the author finds many examples of different links in the
same lens space covered by the same link in $\s3$ and discuss the essentiality of some geometric invariants as the twisted Alexander polynomial. In this paper
we analyze the case of the HOMFLY-PT invariant and the Link Floer Homology. In order to do so, we describe how to pass from a  grid diagram representation to a  
disk diagram representation of the same link.  

This paper is organized as follows.
In section 2 we recall the definition of disk diagram and the corresponding Reidemeister type moves introduced in \cite{CMM}. 

In Section 3, first we resume the definition of grid diagram introduced in \cite{BGH,Co}, then we find 
how to pass from a disk diagram of a given link $L$ in $L(p,q)$ to a grid diagram of the same link and vice versa. We  also discuss the correspondence
between Reidemeister type moves on the disk diagram and equivalence moves on the grid diagram. 

In Section 4  we deal with the HOMFLY-PT invariant of links in lens spaces introduced in \cite{Co}. We study how it behaves under change of orientation
of the link and we compute it on some examples in order to discuss whether this invariant is essential or not.

Finally, Section 5 concerns Link Floer Homology. We  generalize the combinatorial definition of Link Floer Homology, developed  in  \cite{MOS} for links in $\s3$
and in  \cite{BGH}  for knots in lens spaces,
to the case of links in lens spaces and study its behaviour under change of orientation. We find examples of links with the same covering distinguished by this
invariant. All the detailed computations of the Link Floer Homology of such examples are contained  in the Appendix.

The results stated in this paper hold both in the \emph{Diff} category and in the \emph{PL} category, as well as in the \emph{Top} 
category if we consider only tame links. Moreover we consider oriented links up to ambient isotopy.

\end{section}


\begin{section}{Links in lens spaces via disk diagrams}

In this section we recall the notion of the disk diagram for  links in lens spaces developed in  \cite{CMM}, and the corresponding equivalence moves.

\paragraph{A model for lens spaces}
We start by recalling the model that we use for lens spaces. Let $p$ and $q$ be two coprime integers such that $ 0 \leqslant q < p$. 
The unit ball  is the set  $B^3=\{(x_{1},x_{2},x_{3}) \in \mathbb{R}^{3} \ | \ x_{1}^{2}+x_{2}^{2}+x_{3}^{2}\leqslant1\}$ and $E_{+}$ and $E_{-}$ denote, respectively, the upper and the lower closed hemisphere of $\partial B^{3}$.  Label with  $B^{2}_{0}$ the equatorial disk, that is  the intersection of the plane $x_{3}=0$ with $B^{3}$.  Finally let $N=(0,0,1)$ and $S=(0,0,-1)$.
Consider the rotation  \mbox{$g_{p,q}\colon E_{+} \rightarrow E_{+}$} of $2 \pi q /p$ radians around the $x_{3}$-axis and the reflection  \hbox{$f_{3}\colon E_{+} \rightarrow E_{-}$} with respect to the plane $x_{3}=0$ (see  Figure~\ref{L(p,q)}).

\begin{figure}[h!]                      
\begin{center}                         
\includegraphics[width=9cm]{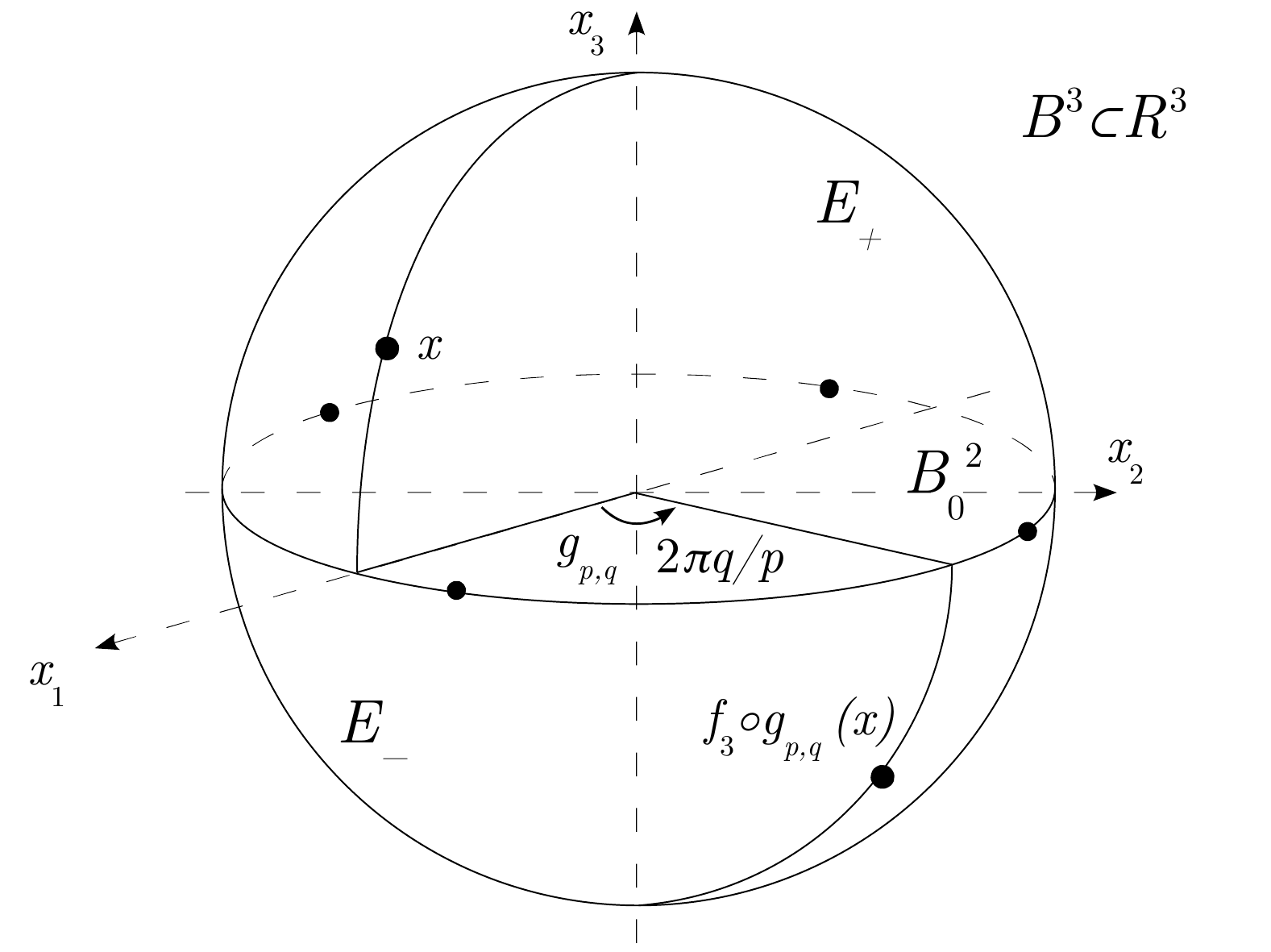}
\caption[legenda elenco figure]{A model for $L(p,q)$.}\label{L(p,q)}
\end{center}
\end{figure}

The \emph{lens space} $L(p,q)$ is the quotient of $B^{3}$ by the equivalence relation on $\partial B^{3}$ which identifies $x \in E_{+}$ with $f_{3} \circ g_{p,q} (x) \in E_{-}$. We denote with \mbox{$F\colon B^{3} \rightarrow L(p,q)=B^{3} / \sim$} the quotient map. Notice that  on the equator $\partial B^{2}_{0}=E_{+} \cap E_{-}$ each equivalence class contains $p$ points. Clearly we have $L(1,0)\cong \s{3}$ and $L(2,1) \cong \rp{3}$.

\paragraph{The construction of the disk diagram}

 We  briefly recall the construction of the disk diagram for a link in a lens space  developed in  \cite{CMM}. Throughout the section  we assume $p>1$.
Let $L\subset L(p,q)$ be a link and consider $L'=F^{-1}(L)$. By moving $L$ via a small isotopy in $L(p,q)$, we can suppose that $L'$ is the disjoint union of closed curves in $\text{int}(B^{3})$ and arcs properly embedded in $ B^{3}$  not containing $N$ and $S$.
Let $\mathbf{p} \colon B^{3} \smallsetminus \{ N,S \} \rightarrow B^{2}_{0}$ be the projection defined by $\mathbf{p}(x)=c(x) \cap B^{2}_{0}$, where $c(x)$ is the circle (possibly a line) through $N$, $x$ and $S$.
Project $L'$ using $\mathbf{p}_{|_{L'}}\colon L' \rightarrow B^{2}_{0}$. 

As in the classical case,  we can assume, by moving $L$ via a small isotopy, that the projection $\mathbf{p}_{|_{L'}} \colon L' \rightarrow B^{2}_{0}$ of $L$ is \emph{regular}, namely
\begin{enumerate} \itemsep-4pt
\item[(1)] the projection of $L'$ contains no cusps;
\item[(2)] all auto-intersections of $\mathbf{p}(L')$ are transversal;
\item[(3)] the set of multiple points is finite, and all of them are actually double points;
\item[(4)] no double point is on $\partial B^{2}_{0}$.
\end{enumerate}

Finally, double points are resolved with underpasses and overpasses as in the diagram for links in $\s3$.
A \emph{disk diagram} of a link $L$ in $L(p,q)$ is a regular projection of $L'=F^{-1}(L)$ on the equatorial disk $B^{2}_{0}$, with specified overpasses and underpasses (see Figure~\ref{link3}).

Notice that an orientation of the link $L$ induces an orientation on $L'$ and so on a diagram of $L$. 

\begin{figure}[h!]                      
\begin{center}                         
\includegraphics[width=12cm]{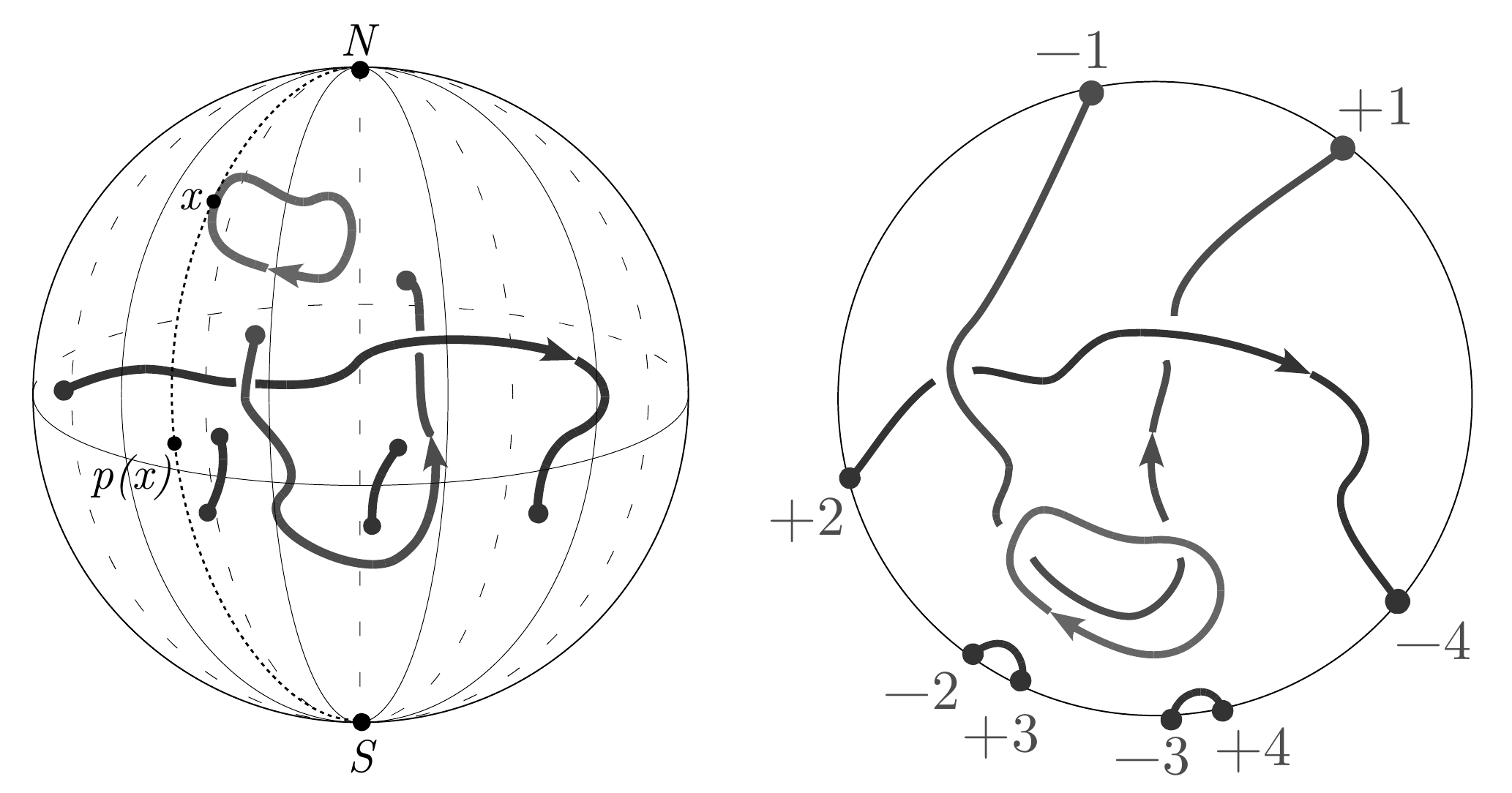}
\caption[legenda elenco figure]{A link in $L(9,1)$ and its corresponding disk diagram.}\label{link3}
\end{center}
\end{figure}

In order to make the disk diagram more comprehensible, we add an indexation of the boundary points of the projection as follows: first, assume that the equator $\partial B^{2}_{0}$ is oriented counterclockwise if we look at it from $N$, then, according to this orientation, label with $+1, \ldots, +t$ the endpoints of the projection of the link coming from the upper hemisphere, and with $-1, \ldots, -t$ the endpoints coming from the lower hemisphere, respecting the rule $+i \sim -i$. An example is shown in Figure~\ref{link3}.

\paragraph{Reidemeister type moves}
In \cite{CMM} it is shown that two disk diagrams represent the same link if and only if they are connected by a finite sequence of the seven Reidemeister type moves depicted in Figure \ref{R17}.

\begin{figure}[h!]                      
\begin{center}                         
\includegraphics[width=14cm]{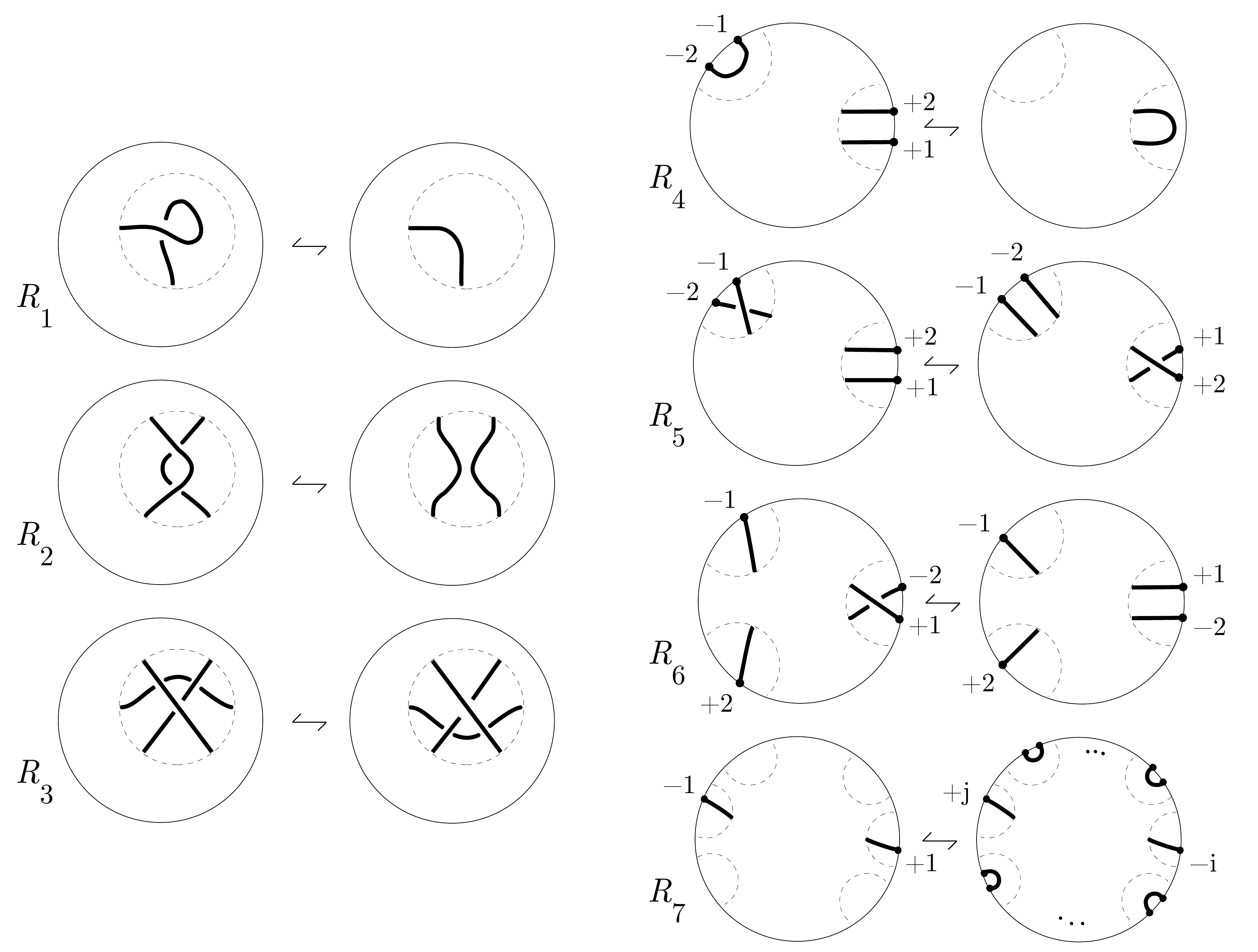}
\caption[legenda elenco figure]{Generalized Reidemeister moves.}\label{R17}
\end{center}
\end{figure}

\end{section}

\begin{section}{Connection with the grid diagram of links in lens space}

In this section first we recall the notion of grid the diagram for  links in  lens spaces developed in \cite{BGH} and \cite{Co}, 
then  we explain how to transform a disk diagram into a grid diagram and vice versa, showing also the connection between the equivalence  moves on the two different diagrams.

\paragraph{Grid diagram of links in lens space}
  A \emph{(toroidal) grid diagram} $G$ in $L(p,q)$  with grid number $n$  is a quintuple
  $(T^{2}, \boldsymbol{\alpha}, \boldsymbol{ \beta},\mathbb{O}, \mathbb{X})$ that satisfies the following conditions 
  (see Figure \ref{grid} for an example with grid number $3$ in $L(4,1)$)

  \begin{figure}[h!]                      
\begin{center}                         
\includegraphics[width=15cm]{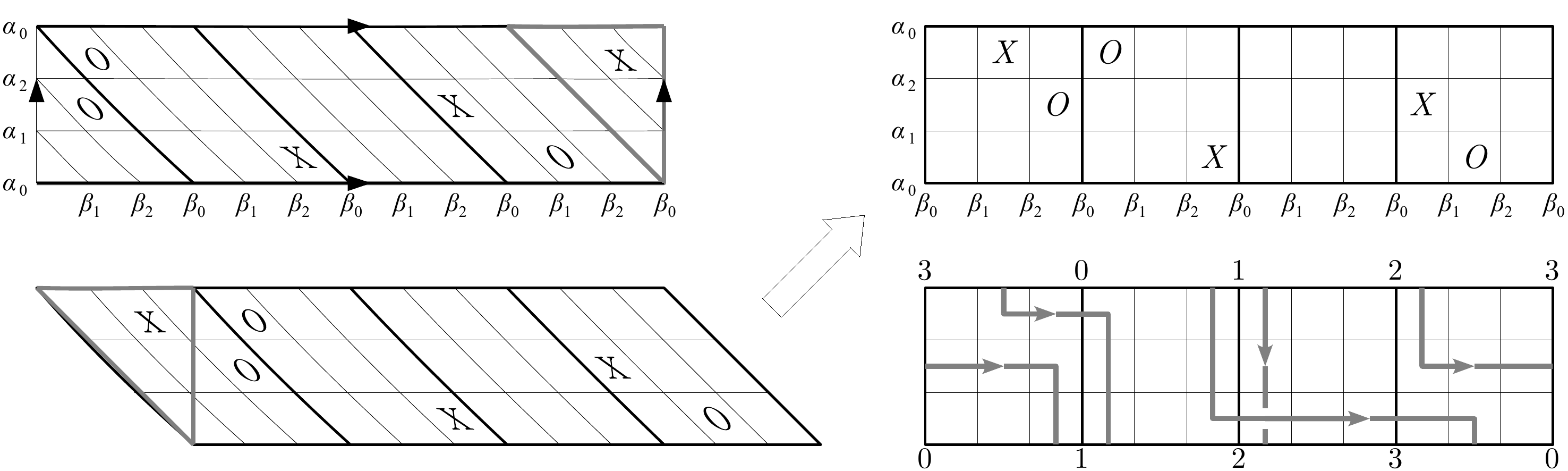}
\caption[legenda elenco figure]{From a grid diagram with  grid number $3$  to its corresponding link in $L(4,1)$.}\label{grid}
\end{center}
\end{figure}
  
\begin{itemize}
\item $T^{2}$ is the standard oriented torus $\mathbb{R}^{2}/ \mathbb{Z}^{2}$, where $\mathbb{Z}^{2}$ is the lattice generated by the vectors $(1,0)$ and $(0,1)$;

\item $\boldsymbol{\alpha}= \{ \alpha_{0}, \ldots , \alpha_{n-1} \}$ are the images in $T^{2}$ of the $n$ lines in $\mathbb{R}^{2}$ 
described by the equations $y=i/{n}$, for $i=0,\ldots,n-1$; the complement 
$T^{2} \smallsetminus (\alpha_{0} \cup \ldots \cup \alpha_{n-1})$ has $n$ connected annular components, called the \emph{rows} of the grid diagram; 

\item $\boldsymbol{\beta}= \{ \beta_{0}, \ldots , \beta_{n-1} \}$ are the images in $T^{2}$ of the $n$ lines in $\mathbb{R}^{2}$ described by the equations
$y=-\frac{p}{q}(x-\frac{i}{pn})$, for $i=0,\ldots,n-1$; the complement \hbox{$T^{2} \smallsetminus (\beta_{0} \cup \ldots \cup \beta_{n-1})$} has $n$ connected annular components, 
called the \emph{columns} of the  grid diagram; 

\item $\mathbb{O}=\{ O_{0}, \ldots, O_{n-1} \}$ (resp. $\mathbb{X}=\{ X_{0}, \ldots, X_{n-1} \}$) are $n$ 
points in \hbox{$T^{2} \smallsetminus (\boldsymbol{\alpha} \cup \boldsymbol{\beta})$} called \emph{markings}, 
such that any two points in $\mathbb{O}$ (resp.   $\mathbb{X}$)  lie in different rows and in different columns.
\end{itemize}

In order to make the identifications of the diagram's boundary easier to understand, it is possible to perform the  ``shift'' depicted 
in Figure \ref{grid}. Notice that, if we forget about $L(p,q)$'s identifications, the curve $\beta_0$ divides the rectangle of  a grid diagram into 
$p$ adjacent squares, that we will call  \textit{boxes} of the diagram.

A grid diagram $G$ represents an oriented link $L\subset L(p,q)$ obtained as follows. First, denote with $V_{\boldsymbol{\alpha}}$ and $V_{\boldsymbol{\beta}}$  the two solid 
tori having, respectively, $\boldsymbol{\alpha}$ and  $\boldsymbol{\beta}$  as   meridians. Clearly     $V_{\boldsymbol{\alpha}} \cup_{T^{2}} V_{\boldsymbol{\beta}}$
is a genus one Heegaard splitting representing $L(p,q)$.  Then connect  
\begin{itemize}
 \item[(1)] each $X_{i}$ to the unique $O_{j}$ lying in the same row with an arc embedded in the row and disjoint from the curves of $\boldsymbol{\alpha}$, and 
\item[(2)] each $O_{j}$ to the unique $X_{l}$ lying in the same column by an arc embedded in the column and disjoint from the curves of $\boldsymbol{\beta}$, 
\end{itemize} obtaining a multicurve immersed in $T^{2}$. Finally   remove the self-intersections,  pushing the lines of (1)  into $V_{\boldsymbol{\alpha}}$
and  the lines of (2)  into $V_{\boldsymbol{\beta}}$.   
The orientation on $L$ is obtained by orienting the horizontal arcs connecting the markings   from the $X$ to the $O$.
See Figure \ref{grid} for an example in $L(4,1)$.

Notice that, the presence in the grid diagram  of an  pair of marking $X$ and $O$ in the same position  corresponds to a trivial component of the represented link
(see the bottom row of the first box of  Figure \ref{trivial}).

By Theorem 4.3 of \cite{BGH}, each link  $L \subset L(p,q)$ can be represented by a grid diagram. 
The idea of the proof is a PL-approximation with orthogonal lines of the link  projection on the torus.

\paragraph{Equivalence moves for  grid diagrams}

A \emph{grid (de)stabilization} is a move that (decreases) increases by one the grid number. Figure \ref{stab} shows an example in $L(5,2)$ 
of a $X:NW$ grid (de)stabilization, where $X$ is the grid marking chosen for the stabilization and $NW$ refers to the arrangement of the new markings. 
Of course, we can have also (de)stabilization with respect to $O$ markings and with $NE,SW$ and $SE$ arrangements.

\begin{figure}[h!]                      
\begin{center}                         
\includegraphics[width=12cm]{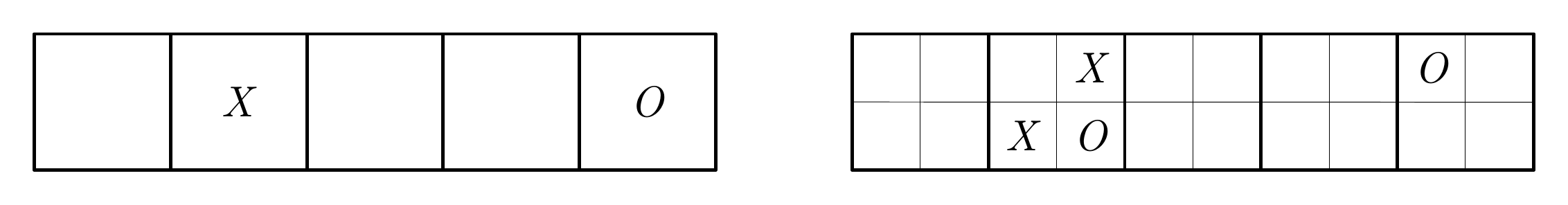}
\caption[legenda elenco figure]{An example of (de)stabilization in $L(5,2)$.}\label{stab}
\end{center}
\end{figure}

A grid diagram \emph{commutation} interchanges either two adjacent columns or two adjacent  rows as follows.
Let $A$ be the annulus containing the two considered columns (or rows) $c_1$ and $c_2$. 
The annulus is divided into $pn$ parts by the rows (columns). Let $s_1$ and $s_2$ be the two
bands of the annulus containing the markings of $c_1$. 
Then the commutation is \emph{interleaving} 
if the markings of $c_2$ are in different components of $A-s_1-s_2$, and \emph{non-interleaving} otherwise (see Figure \ref{comm}).

\begin{figure}[h!]                      
\begin{center}                         
\includegraphics[width=14cm]{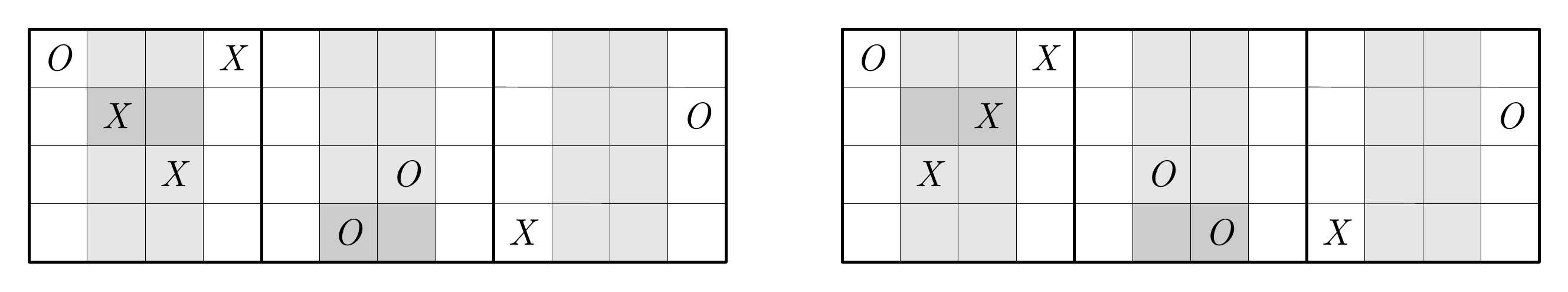}
\caption[legenda elenco figure]{An example of non-interleaving commutation in $L(3,1)$.}\label{comm}
\end{center}
\end{figure}

\begin{prop}{\upshape\cite{BG}}
Two grid diagrams of links in  $L(p,q)$ represent  the same link if and only if there exists a finite sequence of (de)stabilizations and non-interleaving commutations connecting the two grid diagrams.
\end{prop}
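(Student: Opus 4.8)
The plan is to treat the two implications separately; the reverse implication (sufficiency of the moves) is local, whereas the forward implication---completeness of the move set---carries the real content. For sufficiency I would check that each elementary move preserves the ambient isotopy class of the represented link. In a non-interleaving commutation of two columns $c_1,c_2$ (the row case being symmetric), the hypothesis that the markings of $c_2$ lie in a single component of $A-s_1-s_2$ guarantees that the vertical arc associated with $c_1$ can be slid across $c_2$ inside the solid torus $V_{\boldsymbol{\beta}}$ without meeting the other arc; this is an ambient isotopy supported in a regular neighbourhood of the annulus $A$. In a (de)stabilization a single marking-arc is replaced by two arcs meeting at a new pair of markings in a corner configuration, and the extra corner can be contracted through the Heegaard torus $T^2$, again by an ambient isotopy with small support. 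Both checks are purely local, take place away from the boundary identifications of the fundamental domain, and reduce to the corresponding verifications in the classical $\s3$ grid calculus.

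For necessity I would first realise the link determined by a grid diagram in \emph{grid position} with respect to the Heegaard splitting $V_{\boldsymbol{\alpha}}\cup_{T^2}V_{\boldsymbol{\beta}}$: the horizontal arcs lie on the $\boldsymbol{\alpha}$-annuli pushed slightly into $V_{\boldsymbol{\alpha}}$, the vertical arcs on the $\boldsymbol{\beta}$-annuli pushed into $V_{\boldsymbol{\beta}}$, and the link meets $T^2$ exactly in the markings. Given two grid diagrams $G_0,G_1$ representing ambient isotopic links, I would fix an ambient isotopy carrying the grid-position link of $G_0$ onto that of $G_1$ and study the resulting one-parameter family of projections to $T^2$. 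After putting the family in general position, its combinatorial type changes only at finitely many times, through a short list of elementary transitions: Reidemeister-type events on the torus together with events in which an arc sweeps across a grid line $\alpha_i$ or $\beta_i$. The decisive step is then to show that every such transition can be realised by, or absorbed into, a finite sequence of (de)stabilizations and non-interleaving commutations.

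The main obstacle lies precisely in this last step: matching the generic transitions of the projected family to the admissible grid moves, and in particular re-expressing an \emph{interleaving} commutation---which is not itself permitted---as a composition of stabilizations and non-interleaving commutations. This is the lens-space analogue of the Cromwell--Dynnikov completeness argument, and the genus-one setting adds a genuine complication: transitions occurring near the curve $\beta_0$ interact with the $p$-fold boundary identification of the fundamental domain, so one must keep careful track of how the arcs wind around $T^2$ along the $\boldsymbol{\beta}$-slope $-p/q$. An alternative I would pursue in parallel is to lift the whole configuration to the universal cover $\s3$ (or a suitable cyclic cover), reduce there to the known $\s3$ grid equivalence, and then descend, checking that the moves can be chosen $\mathbb{Z}/p$-equivariantly so as to project back to the required sequence in $L(p,q)$.
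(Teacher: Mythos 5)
First, a point of comparison: the paper itself contains no proof of this proposition --- it is imported wholesale from Baker--Grigsby \cite{BG} --- so your sketch has to be judged as a free-standing argument, and as such it has a genuine gap exactly where you yourself locate ``the real content''. For the forward implication you set up the standard Cromwell--Dynnikov scheme (grid position with respect to $V_{\boldsymbol{\alpha}}\cup_{T^2}V_{\boldsymbol{\beta}}$, a generic one-parameter family of torus projections, finitely many elementary transitions) and then state that the decisive step is to show every transition is realised by (de)stabilizations and non-interleaving commutations. That step is named, not sketched; since the classification of the transitions and their absorption into the move set \emph{is} the theorem, what you have written is a plan rather than a proof. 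Worse, the one concrete subgoal you do formulate is wrong in principle: an \emph{interleaving} commutation cannot be ``re-expressed as a composition of stabilizations and non-interleaving commutations'', because interchanging two interleaved columns forces the corresponding vertical arcs in $V_{\boldsymbol{\beta}}$ through one another --- it is a crossing change (this is precisely how $L_+$ and $L_-$ differ on grid diagrams, cf.\ Figure \ref{skein}), so it generally changes the isotopy class of the link, whereas every legal move preserves it. A correct completeness argument must show that interleaving commutations are never \emph{needed}, not that they are \emph{expressible}.

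Your fallback route --- lift everything to the universal cover $\s3$, apply the classical grid theorem there, and descend --- also has an unpatched hole where you wave at ``checking that the moves can be chosen $\mathbb{Z}/p$-equivariantly''. An isotopy in $L(p,q)$ does lift to a $\mathbb{Z}_p$-equivariant isotopy, and the lift of a grid number $n$ diagram is an honest $\s3$ grid diagram of grid number $pn$; but the move sequence produced by the $\s3$ theorem carries no equivariance at all, and a single stabilization upstairs breaks the $\mathbb{Z}_p$-symmetry, hence does not descend to any move on the lens space diagram. Making the sequence equivariant is not a verification one tacks on at the end; it is essentially the original problem in disguise. By contrast, your treatment of the easy direction (each legal move is an ambient isotopy supported near the relevant annulus or marking) is fine as described.
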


Please notice that there are also two other  hidden moves on a grid diagram, depending directly on the projection of the link on the Heegaard torus:
we can make a \emph{cyclic permutation} of the rows or of the columns -following the pasting of the torus- and we 
can do a \emph{reverse connection} by connecting the grid markings also in the opposite direction.

\paragraph{Passing from disk diagrams to twisted grid diagrams and vice versa}

The following two propositions describe how to pass from a disk diagram to a grid diagram representing the same link and vice versa.
\begin{prop}\label{GLp}
Let $L$ be a link in $L(p,q)$ assigned via a grid diagram $G_{L}$. Then we can obtain the disk diagram $D_{L}$ representing $L$
in  the following way (see Figure \ref{GL})

\begin{figure}[h!]                      
\begin{center}                         
\includegraphics[width=14cm]{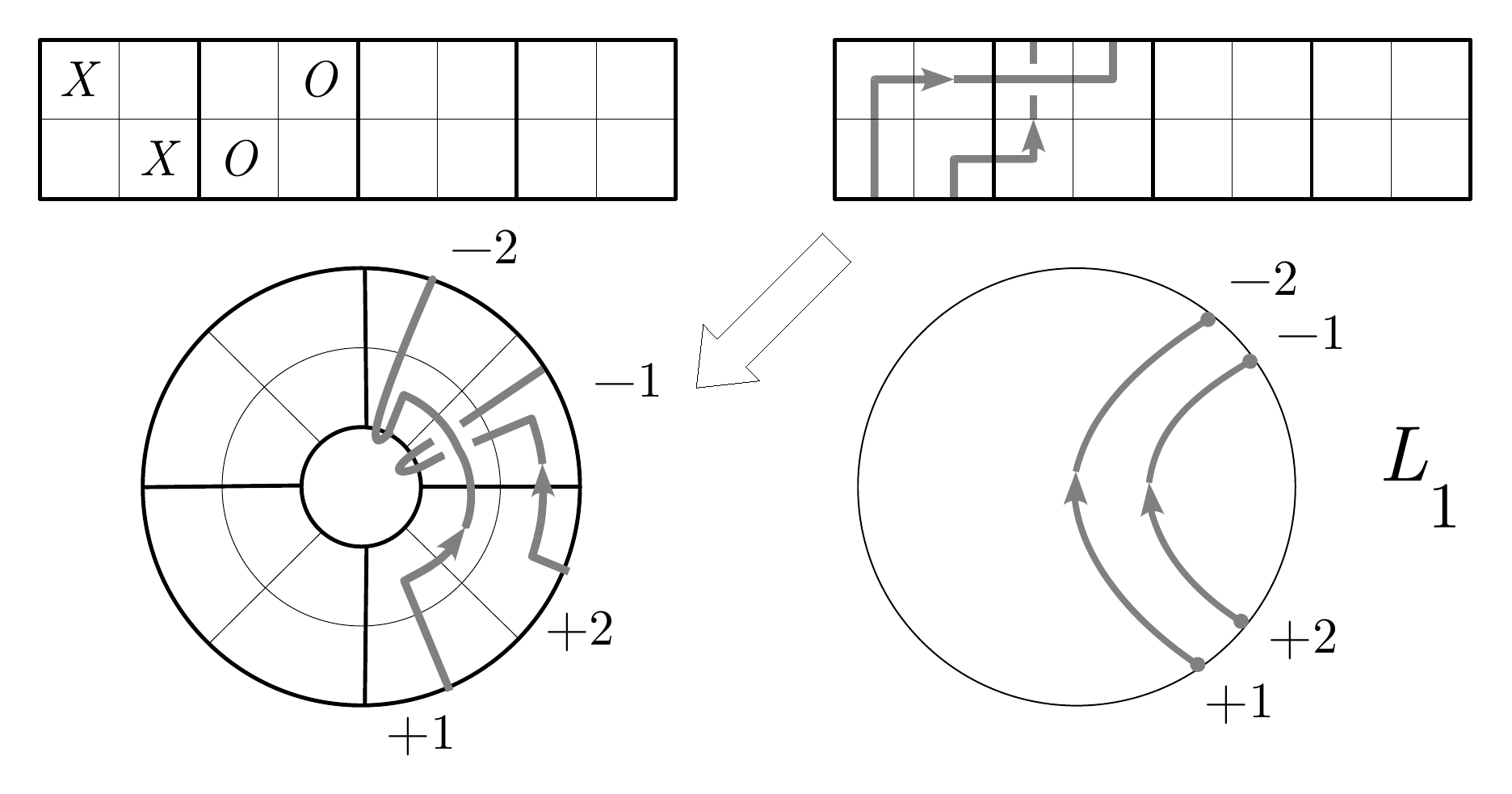}
\caption[legenda elenco figure]{From grid diagram $G_{L}$ to disk diagram $D_{L}$ in $L(4,1)$.}\label{GL}
\end{center}
\end{figure}
\begin{itemize}
 \item  consider the grid diagram $G_{L}$ and draw the link according to the previous convention;
 \item round the rectangle into a circular annuli, joining the first and the last column: 
 the horizontal lines become circles and the vertical lines become radial lines on the disk diagram. 
 \item the lower boundary points on the rectangle become plus type boundary points on the disk. 
 The upper boundary points, instead, are inside the disk: by
 moving them under all the circle lines  we can bring them on the boundary of the disk, so that they become minus-type boundary points.
\end{itemize}
\end{prop}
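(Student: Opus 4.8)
The plan is to carry out the whole argument inside the fixed ball model $B^3/\!\sim$ of $L(p,q)$, so that no abstract homeomorphism of manifolds is needed: I would realise the genus-one Heegaard splitting underlying the grid diagram as an explicit decomposition of $B^3/\!\sim$, place the link produced by $G_{L}$ as concrete arcs in $B^3$, and then simply observe that the projection $\mathbf p$ of those arcs is the diagram described by the three bullets. The basic remark is that $\mathbf p$ maps $\partial B^3\smallsetminus\{N,S\}$ to $\partial B^2_0$ by radial projection in the vertical planes through the $x_3$-axis (since for $x\in\partial B^3$ the circle through $N,x,S$ is the unit circle of that plane), while it maps the interior of $B^3$ to the interior of $B^2_0$. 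Moreover, for a concentric circle $C_r\subset B^2_0$ of radius $0<r<1$, the annulus $\mathbf p^{-1}(C_r)$ closed up by the boundary identification is a torus splitting $L(p,q)$ into two solid tori, one a neighbourhood of the image of the $x_3$-axis (a circle, because $N\sim S$) and the other a neighbourhood of the image of the equator $\partial B^2_0$. I would identify this torus with the grid torus $T^2$ so that $\boldsymbol\alpha$ and $\boldsymbol\beta$ become the two meridian systems; under the rounding of the second bullet the horizontal $\alpha$-lines then become the concentric circles and the vertical $\beta$-lines become the radial arcs of $B^2_0$.

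Next I would track the two families of arcs of $L$ read off $G_{L}$: the horizontal arcs joining each $X_i$ to the $O_j$ of its row, pushed into $V_{\boldsymbol\alpha}$, and the vertical arcs joining each $O_j$ to the $X_l$ of its column, pushed into $V_{\boldsymbol\beta}$. Cutting $T^2$ open along one curve of each system yields the rectangle; rounding it restores one identification in the plane of the disk (the joining of the first and last column, which closes the angular direction), while the other is recorded by the crossings of $L$ with the corresponding seam. These crossings are exactly the endpoints where $L'$ meets $\partial B^3$: one hemisphere contributes the plus endpoints and the other the minus endpoints, and the matching $+i\sim-i$ is the restriction to the equator of $x\mapsto f_3\circ g_{p,q}(x)$, i.e.\ the rotation by $2\pi q/p$. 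Here lies the main content to be checked: one must verify that this rotation, together with the subdivision of the rectangle into $p$ boxes, is exactly the framing encoded by the slope $-p/q$ defining $\boldsymbol\beta$, and that the counterclockwise indexing of $\partial B^2_0$ fixed in Section~2 assigns the labels $\pm1,\dots,\pm t$ in the claimed cyclic order (with the plus and minus sides interchanged relative to the lower and upper edges of the rectangle, as the recipe states).

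The third bullet is then a statement about the over/under resolution of the projection. After rounding, the endpoints produced by one edge of the rectangle already lie on $\partial B^2_0$ and give the plus points directly; those produced by the other edge lie on the inner circle, hence inside the disk, and must be dragged radially out to the boundary. Dragging each one ``under all the circles'' reflects the fact that, in $B^3$, the corresponding strand runs through the appropriate hemisphere and so passes beneath every level circle it meets. I would check that this isotopy of the projection keeps it regular in the sense of conditions (1)--(4) of Section~2 (only transverse double points, no cusp, no triple point, no double point on $\partial B^2_0$) and that the resulting crossings agree with the convention used to remove the self-intersections of the grid multicurve, namely pushing the row arcs into $V_{\boldsymbol\alpha}$ over the column arcs pushed into $V_{\boldsymbol\beta}$. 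As every step is a homeomorphism or an ambient isotopy of the fixed model, the output is a disk diagram of the same link $L$.

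The delicate point is therefore not any deep topology but the framed bookkeeping of the first two steps: fixing the identification of $T^2$ with a level torus of $B^3/\!\sim$ so that not only the meridian systems but also their framings correspond, and confirming that the gluing rotation $g_{p,q}$ matches both the slope $-p/q$ of $\boldsymbol\beta$ and the boundary rule $+i\sim-i$. Once this framed correspondence is pinned down, the projection of the arcs, the cyclic order of the labels, and the crossing data all follow by direct inspection. Finally, since each operation in the recipe — rounding, restoring one identification, and dragging the interior endpoints under the circles — is manifestly invertible, the same analysis will yield the converse passage from a disk diagram back to a grid diagram.
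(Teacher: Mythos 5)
You follow the same route as the paper: realise the grid torus as a Heegaard torus sitting inside the ball model $B^3/\!\sim$ and obtain $D_L$ by applying the projection $\mathbf{p}$; the paper does precisely this, delegating both the embedding and a subsequent deformation of the torus to Figure \ref{GLdim}. There is, however, a concrete error at the one step you try to make explicit. The set $\mathbf{p}^{-1}(C_r)$ is an open annulus of revolution whose closure in $B^3$ meets $\partial B^3$ exactly in $N$ and $S$; since $N\sim S$, its image in $L(p,q)$ is a sphere with two points identified, i.e.\ a \emph{pinched} torus, not a torus, and the closures of its two complementary regions are pinched solid tori, not solid tori. So this surface cannot be identified with $T^2$ and does not realise the Heegaard splitting $V_{\boldsymbol{\alpha}}\cup_{T^2}V_{\boldsymbol{\beta}}$. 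The correct surface is the boundary of a regular neighbourhood of the image of the $x_3$-axis (equivalently, of the image of $\partial B^2_0$): near the point $[N]=[S]$ it tubes around the axis instead of passing through it, resolving the pinch. This is not a pedantic point, because every essentially vertical curve on your pinched surface runs through $[N]=[S]$; hence the column arcs of the link read off $G_L$ would pass through, or arbitrarily close to, the poles, which is exactly where $\mathbf{p}$ is undefined or fails to give a regular projection. Resolving the pinch is what the paper's ``deformation of the Heegaard torus'' in Figure \ref{GLdim} accomplishes, and it is also what produces the behaviour in the third bullet of the statement: the strands emerging near the resolved pinch are the ones that must be dragged under all the circles out to $\partial B^2_0$ as minus-type points.

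Beyond this, you explicitly defer what you yourself call ``the main content to be checked'', namely that the gluing rotation $g_{p,q}$ matches the slope $-p/q$ of the $\boldsymbol{\beta}$ curves and the labelling rule $+i\sim-i$. The paper is no more explicit there (this is absorbed into its figures), so that deferral alone would not count as a gap; but combined with the pinched-torus slip it means that the identification of the grid torus with an explicit torus in the ball model, framings included, is never correctly carried out in your argument. Replace the level surface by the neighbourhood boundary described above and your bookkeeping programme does go through, essentially reproducing the paper's proof. Note also that the converse direction is not purely formal inversion, since an arbitrary disk diagram must first be PL-approximated by orthogonal arcs, as in Proposition \ref{DG}.
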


\begin{proof}
The grid diagram of a link in a lens space comes from the representation of lens spaces as Heegaard splitting. 
That is to say, our grid diagram is the toric Heegaard surface. 
If we want to transform the grid diagram into the disk diagram $D_{L}$ we have to put our Heegaard surface inside $B^3$ in the model of $L(p,q)$ where we quotient $B^3$ by the relation $\sim$ on its boundary. This can be done as Figure \ref{GLdim} shows.

\begin{figure}[h!]                      
\begin{center}                         
\includegraphics[width=14cm]{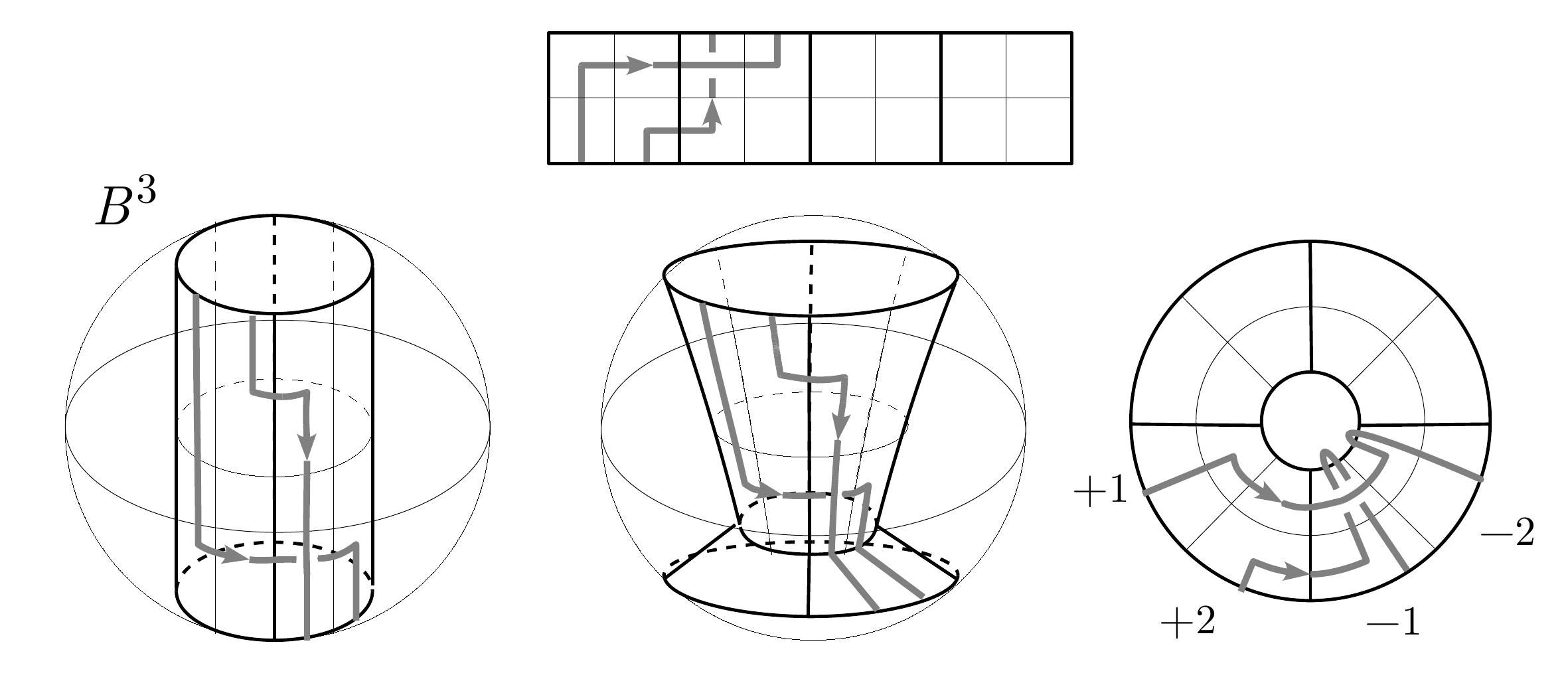}
\caption[legenda elenco figure]{How to insert the grid diagram of $L$ into  the $B^3$-model of $L(4,1)$.}\label{GLdim}
\end{center}
\end{figure}

Now we want to project this surface on the equatorial disk $B^2_0$, and, in order to have a regular projection of the link, we deform the Heegaard torus as in Figure \ref{GLdim}. The projection of the deformed grid diagram on $B^2_0$ gives $D_{L}$.
\end{proof}

\begin{oss}
If the grid diagram $G_{L}$ has grid number $n$, then  the disk diagram $D_{L}$, obtained from $G_L$, has at most $n (p-1)$ boundary points. Indeed,
the number of boundary points of $D_{L}$ is exactly the number of the points onto the lower and upper boundary of the rectangle of $G_{L}$, that is, 
at most, $n(p-1)$.
\end{oss}

In the opposite direction, when we know the disk diagram $D_{L}$ of a link $L\subset L(p,q)$, how can we recover the grid diagram $G_{L}$?

\begin{prop}
\label{DG}
Let $L$ be a link in $L(p,q)$, defined by a disk diagram $D_{L}$, then we can get a grid diagram $G_{L}$ of $L$ as follows (see 
Figure \ref{LG})

\begin{figure}[h!]                      
\begin{center}                         
\includegraphics[width=12cm]{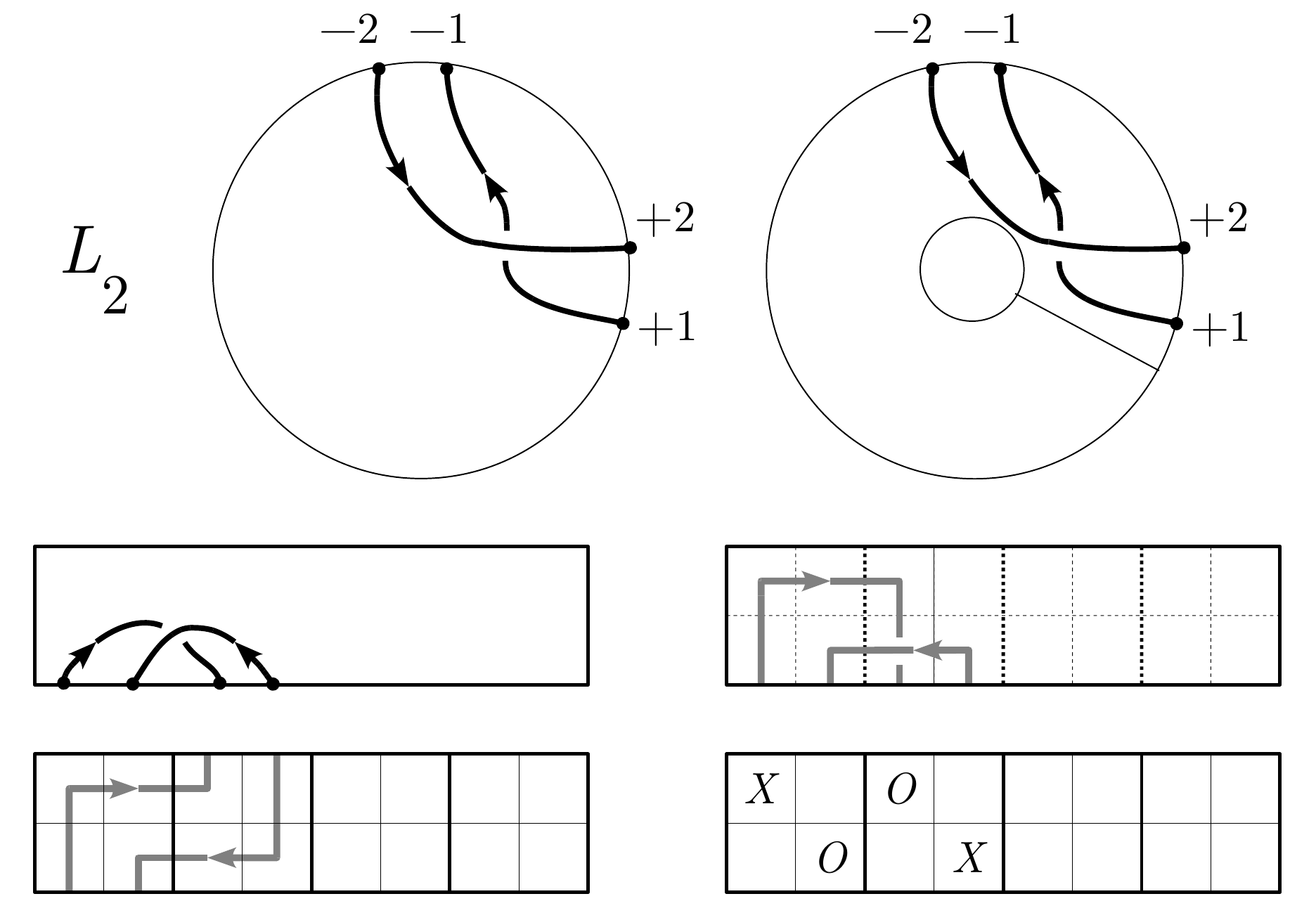}
\caption[legenda elenco figure]{From disk diagram $D_{L}$ to grid diagram $G_{L}$ in $L(4,1)$.}\label{LG}
\end{center}
\end{figure}
\begin{itemize}
 \item consider the disk diagram $D_{L}$ and cut the disk along a ray between the  $+1$ point and the previous boundary
 point (according to the orientation of the disk), obtaining a rectangle;
 \item make an orthogonal PL-approximation of the link's arcs,
 putting all the crossings with horizontal overpass and vertical underpass;
 \item shift the boundary endpoint of $-1, \ldots, -t$ from the lower to the upper side of the rectangle, passing under all the lines;
 \item  put $X$ and $O$ markings on the square corners of the link projection.
\end{itemize}

\end{prop}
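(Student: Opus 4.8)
The plan is to show that the stated four-step construction is the exact inverse of the grid-to-disk recipe of Proposition \ref{GLp}. Since that recipe is already known to turn a grid diagram into a disk diagram of the link it carries, it suffices to check two things: (i) the output of the construction is a legitimate grid diagram in the sense of the definition -- $n$ horizontal $\boldsymbol{\alpha}$-curves, $n$ curves $\boldsymbol{\beta}$ of the prescribed slope, and $n$ markings of each type with exactly one $X$ and one $O$ in every row and every column; and (ii) feeding this grid diagram back through Proposition \ref{GLp} returns the disk diagram $D_{L}$, up to the small isotopies already permitted. Together these give that $G_{L}$ and $D_{L}$ represent the same link.

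First I would fix the correspondence between the two models of $L(p,q)$. Cutting $B^{2}_{0}$ along the chosen ray and unrolling identifies the cut disk with the rectangular fundamental domain of the Heegaard torus: the outer boundary $\partial B^{2}_{0}$ becomes the lower edge, the concentric circles coming from the horizontal levels of the approximation become the curves $\boldsymbol{\alpha}$, and the radial cut becomes the two vertical sides that are reglued to form the column annulus. The key point is to track the lens space identification $x \sim f_{3}\circ g_{p,q}(x)$: on the equator this is the rotation by $2\pi q/p$, and I would verify that, once the two vertical sides of the rectangle are reglued with this twist, the vertical segments of the approximation assemble into the slope-$(-p/q)$ family $\boldsymbol{\beta}$ after the twisted side-gluing is straightened, and that the boundary labels obey $+i \sim -i$. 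This is precisely the step where the integer $q$ -- hence the twist distinguishing $L(p,q)$ for different $q$ -- enters the grid.

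With the model correspondence in place, I would check the steps in turn. The orthogonal PL-approximation of Step 2 is available after a small isotopy, exactly as in the PL-approximation underlying Theorem 4.3 of \cite{BGH}; taking it fine enough and, if necessary, absorbing superfluous corners by stabilizations secures the grid axioms of (i). The crossing rule ``horizontal overpass, vertical underpass'' is chosen so that horizontal arcs project in front and vertical arcs behind, matching the reconstruction rule that pushes the row-arcs into $V_{\boldsymbol{\alpha}}$ and the column-arcs into $V_{\boldsymbol{\beta}}$; hence the resolved crossings of the reconstructed link agree with those of $D_{L}$. Step 3 -- sliding the points $-1,\dots,-t$ from the outer boundary inward under all the horizontal circles -- is the literal reverse of the move in Proposition \ref{GLp} that carried the minus-type points out to $\partial B^{2}_{0}$, and since the slide passes beneath every strand it alters neither the crossing information nor the isotopy class. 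Finally, placing $X$ and $O$ at the corners and orienting each horizontal arc from $X$ to $O$ (Step 4) reproduces the orientation convention of the grid reconstruction, yielding (ii).

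The main obstacle is the boundary bookkeeping in the model correspondence: one has to confirm that a single ray-cut together with the rotation $g_{p,q}$ regenerates the full slope-$(-p/q)$ family $\boldsymbol{\beta}$ and the pairing $+i \sim -i$ with no off-by-one shift, and with the $p$ boxes of each column traversed in the correct cyclic order. A secondary technical point is guaranteeing that the rectilinear approximation can always be arranged into a legal grid -- one marking of each type per row and per column -- without changing $L$; this is the place where a stabilization argument, rather than a purely pictorial deformation, may be required.
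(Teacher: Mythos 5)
Your proposal is correct and follows essentially the same route as the paper: the paper's own proof consists precisely of observing that the construction is the converse of the proof of Proposition~\ref{GLp}, with the orthogonal PL-approximation supplied by Theorem~4.3 of \cite{BGH}. Your write-up simply fleshes out the details (model correspondence, boundary bookkeeping, crossing conventions) that the paper leaves implicit in its two-sentence argument.
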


\begin{proof}
It is  exactly the converse of the proof of Proposition \ref{GL}. The only difference is that here we have to use the orthogonal PL-approximation suggested by Theorem 4.3 of \cite{BGH}.
\end{proof}

Using  Propositions \ref{GLp}  and \ref{DG}, it is possible to find also a correspondence between the Reidemeister moves on the disk diagrams (depicted in Figure \ref{R17}) 
and the grid diagram's equivalence
moves described in the previous paragraph. This correspondence is summed up in Table \ref{moves}.

\begin{table}[h!]
\label{moves}
\begin{center}
\begin{tabular}{|c|c|}
\hline 
Disk diagram & Grid diagram  \\
\hline
$R_1$ & (de)stab. \\
\hline
$R_2$ & non-inter. comm.  \\
\hline
$R_3$ & non-inter. comm.   \\
\hline
$R_4$ & cyclic perm. of rows  \\
\hline
$R_5$ & cyclic perm. of rows  \\
\hline
$R_6$ & non-inter. comm.   \\
\hline
$R_7$ & column reverse connection  \\
\hline
\end{tabular}
\end{center}
\label{table}
\end{table}

\end{section}

\begin{section}{Essential invariants: the HOMFLY-PT polynomial}

In this section we deal with the HOMFLY-PT polynomial developed in \cite{Co} in order to understand if it is an essential invariant, that is if it is able to 
distinguish links covered by the same link in $\s3$. 
We start by recalling its definition (see \cite{Co} for the details).\\

We say that a link in $L(p,q)$ is  \textit{trivial} if it can be represented by a grid diagram satisfying the following conditions
\begin{itemize}
 \item the markings in each box lie only on the principal diagonal (the one going from NE-corner to the SW-corner);
 \item all the $O$-markings are contained  in the the first box  (from the left); 
 \item the $X$-markings in the same box are contiguous, and if the first box contains $X$-markings,
 one of them lies in the SW corner;
 \item for each $X$-marking, all the other $X$-markings lying in a row below, must lie in a column on  the left.
\end{itemize}

A trivial link will be denoted as $U_{i_0,i_{p-1},\ldots, i_{1}}$ where $i_j\in\mathbb N$ is the number of components of the link belonging to the 
$j$-th homology class. In Figure \ref{trivial} is depicted the trivial link $U_{1,0,1,2}\subset L(4,1)$ having one $0$-homologous component,
zero $1$-homologous component, one $2$-homologous component and two $3$-homologous components.
\begin{figure}[h!]                      
\begin{center}                      
\includegraphics[width=12cm]{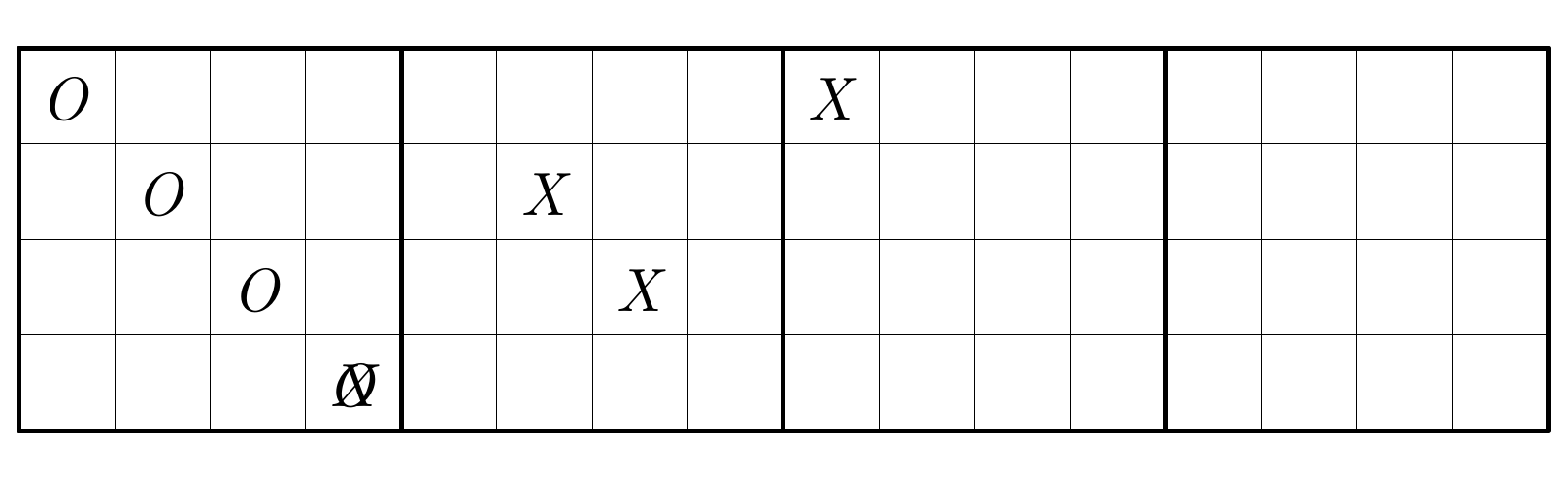}
\caption[legenda elenco figure]{Grid diagram for the trivial link $U_{1,0,1,2}$ in $L(4,1)$.}\label{trivial}
\end{center}
\end{figure}

\begin{teo}{\upshape{\cite{Co}}}\label{Corn}
Let $\mathcal{L}$ be the set of isotopy classes of links in $L(p, q)$ and let $\mathcal{TL} \subset \mathcal{L}$
denote the set of isotopy classes of trivial links. Define $\mathcal{TL}^{\ast} \subset \mathcal{TL}$
to be those trivial links with no nullhomologous components. Let $U$ be the isotopy class of the standard unknot, a local knot in $L(p, q)$ 
that bounds an embedded disk. Given a value $J_{p,q}(T) \in \mathbb{Z}[a^{\pm1},z^{\pm1}]$ for every $T \in \mathcal{TL}^{\ast} $, there is a unique map $J_{p,q} \colon \mathcal{L} \rightarrow \mathbb{Z}[a^{\pm1}, z^{\pm1}]$ such that
\begin{itemize}
\item $J_{p,q}$ satisfies the skein relation $a^{-p}J_{p,q}(L_{+}) - a^{p}J_{p,q}(L_{-}) = z J_{p,q}(L_{0})$.
\item $J_{p,q}(U) = \big( \frac{a^{-1}-a}{z}\big)^{p-1}$
\item $J_{p,q} (U \sqcup L) = \big( \frac{a^{-p} -a^{p}}{z}\big)  J_{p,q} (L) $
\end{itemize}
 \end{teo}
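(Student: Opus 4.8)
The plan is to treat existence and uniqueness separately, since the two halves have very different flavours: uniqueness is a purely formal consequence of the skein relation together with the two normalizations, whereas existence requires an actual construction of the invariant, and that is where essentially all the work sits. I will give the uniqueness argument in some detail and only sketch the existence construction.

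For uniqueness I would show that the three listed properties force the value of $J_{p,q}$ on every link once the values on $\mathcal{TL}^{\ast}$ are fixed. Fix a diagram $D$ of $L$ (a grid or disk diagram) with $c$ crossings, choose a base point and an ordering of the components, and let $b$ be the number of crossings that must be switched to make $D$ descending. Rewriting the skein relation as $J_{p,q}(L_{+})=a^{2p}J_{p,q}(L_{-})+a^{p}z\,J_{p,q}(L_{0})$, and symmetrically for $L_{-}$, lets me alter any single bad crossing: the switched diagram has the same $c$ and one fewer bad crossing, while the oriented resolution $L_{0}$ has $c-1$ crossings. Inducting on the pair $(c,b)$ ordered lexicographically, every value is expressed through values of descending diagrams. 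The key geometric input is that a descending diagram represents a \emph{trivial} link in the sense defined above, and that a crossing change is a homotopy and hence preserves the homology class in $\mathbb{Z}/p$ of each component, so the trivial link reached has the prescribed homology multiplicities. Finally, on $\mathcal{TL}$ I peel off the nullhomologous components one at a time by condition (3), each contributing a factor $\frac{a^{-p}-a^{p}}{z}$, until I reach either an element of $\mathcal{TL}^{\ast}$, whose value is given, or a split union of standard unknots, whose value is pinned down by condition (2). This proves that at most one such $J_{p,q}$ exists.

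For existence I must exhibit a map satisfying all three properties, and the hard part is precisely to show that the recursive scheme above is well defined, i.e. independent of the diagram, the component ordering, and the chosen sequence of crossing changes. The cleanest route is via the HOMFLY-PT skein module. I would realize $L(p,q)$ as the genus-one Heegaard splitting $V_{\boldsymbol{\alpha}}\cup_{T^{2}}V_{\boldsymbol{\beta}}$ used for grid diagrams in Proposition~\ref{GLp}, and use that the HOMFLY-PT skein module of a solid torus is a free module with a basis naturally indexed by the winding (homology) data. One pushes a link off the Heegaard torus into $V_{\boldsymbol{\alpha}}$, expands it in this basis by repeated use of the skein relation, and then imposes the relations coming from sliding over the meridian disks of $V_{\boldsymbol{\beta}}$, that is, from the Dehn filling. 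After checking that these relations are consistent and normalizing by the prescribed values on $\mathcal{TL}^{\ast}$ together with condition (2), the resulting functional satisfies the skein relation and conditions (2)--(3) globally, and invariance under the grid equivalence moves (equivalently the Reidemeister moves of Figure~\ref{R17}) upgrades it to a genuine link invariant.

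The main obstacle, exactly as in the classical Lickorish--Millett argument, is the consistency check underlying existence: one must verify that the skein reduction commutes with every equivalence move. In the lens space setting this is genuinely more delicate than over $\s3$, because of the homological bookkeeping in $\mathbb{Z}/p$ and the boundary identifications on $\partial B^{2}_{0}$ (the $p$ boxes of the diagram), which is why the freedom to choose the values on $\mathcal{TL}^{\ast}$ appears at all. This construction is carried out in \cite{Co}, which I would cite for the existence half; the sketch above records why the statement is forced to hold and isolates where the real content lies.
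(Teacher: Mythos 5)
First, a point of calibration: the paper you are working from does not prove this statement at all. Theorem~\ref{Corn} is imported verbatim from Cornwell's paper \cite{Co} (as the attribution indicates) and is used downstream only as a computational tool, so the only meaningful comparison is with Cornwell's own argument. That argument works entirely with grid diagrams, in the style of Lickorish--Millett: the skein triple is realized by exchanging markings in adjacent interleaved columns, a complexity of grid diagrams is introduced, the reduction to the grid-defined trivial links is carried out by induction on that complexity, and existence is obtained by verifying that the recursive computation is independent of all choices and invariant under the grid equivalence moves (commutations and (de)stabilizations). It is not a descending-diagram argument, and it does not pass through skein modules of the solid torus.

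Your uniqueness half has the right inductive skeleton, but it hinges on the assertion that ``a descending diagram represents a trivial link in the sense defined above'', and that is precisely the nontrivial content, not an input one may assume. Three things are buried there: (i) ``descending'' is not straightforward to define on a disk diagram of $L(p,q)$, since strands exit and re-enter through $\partial B^{2}_{0}$ under the twisted identification, so no global height function is respected; (ii) granting a traversal-order definition, a descending link is a stack of spirals in $V_{\boldsymbol{\alpha}}$, and identifying such a stack with one of Cornwell's trivial links requires isotopies across the complementary solid torus $V_{\boldsymbol{\beta}}$ to normalize windings modulo $p$; the meridian disk one slides across meets the diagram in a $q$-twisted pattern and can re-introduce crossings with the other components, so this step is genuinely delicate and is exactly where the lens space parameter enters; (iii) one also needs that trivial links are classified up to isotopy by the multiset of homology classes of their components, a separate lemma of \cite{Co}. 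Without (ii) and (iii) your induction does not terminate at objects on which $J_{p,q}$ is prescribed. As for existence, your skein-module sketch is a restatement rather than a proof: the claim that the Dehn-filling relations are consistent and that the resulting module is free over $\mathcal{TL}^{\ast}$ (after normalizations (2)--(3)) is equivalent to the theorem itself and is not an independently available fact; you defer it to \cite{Co}, which is legitimate as a citation (it is also what the paper does), but it means the existence half of your proposal carries no argument of its own, and moreover misdescribes the route actually taken there.
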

 
As usual, the links $L_{+}$,$L_{-}$, and $L_0$ differ only in a small neighborhood of a double point: Figure \ref{skein} shows how this difference appears on grid diagrams.

\begin{figure}[h!]                      
\begin{center}                         
\includegraphics[width=7cm]{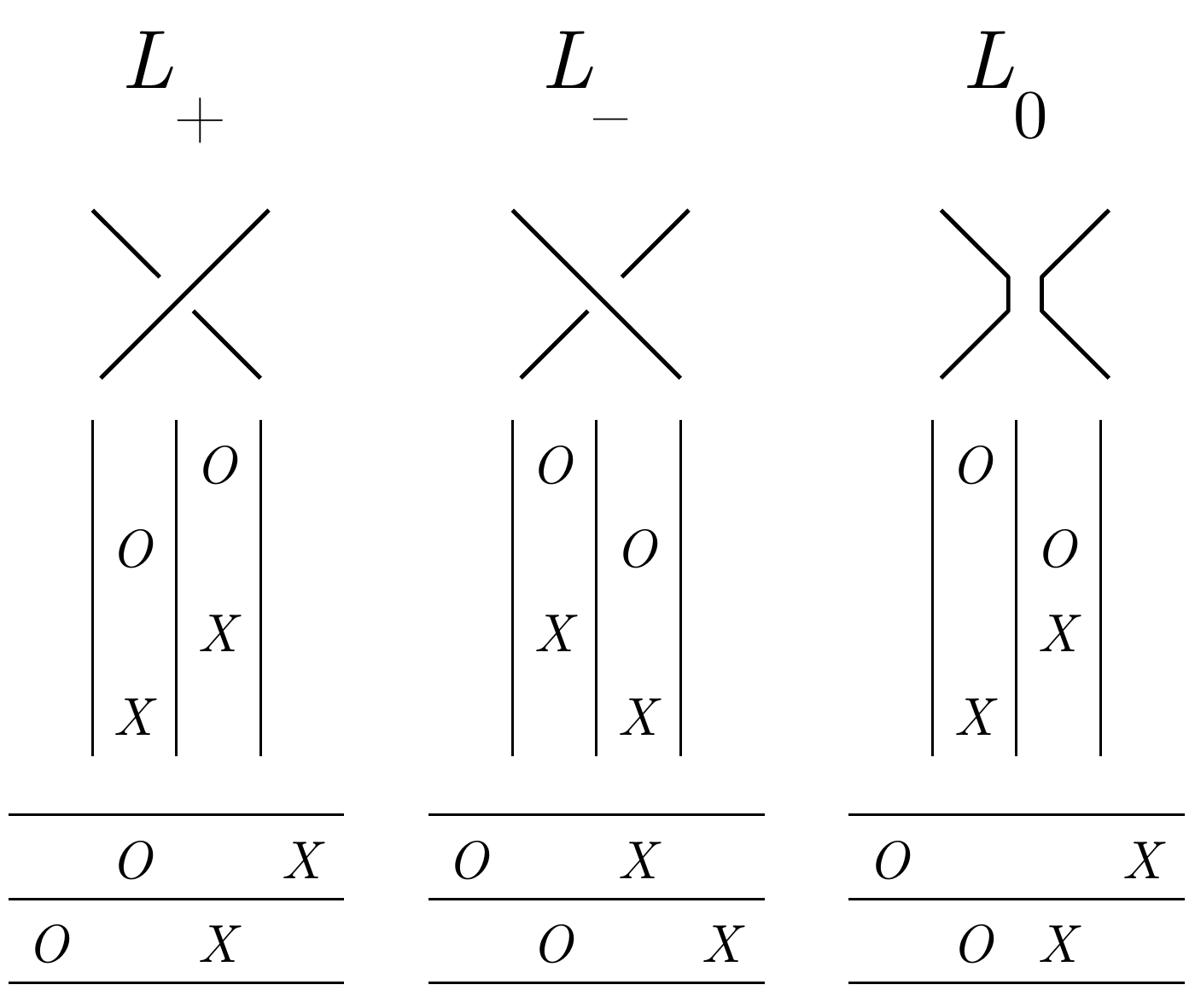}
\caption[legenda elenco figure]{Grid skein relation.}\label{skein}
\end{center}
\end{figure}

The HOMLFY-PT invariant produced by Theorem \ref{Corn} is not yet a polynomial, Cornwell suggests to produce a polynomial in the usual HOMFLY two variables by defining $J_{p,q}$ on the trivial links as the classic HOMFLY-PT polynomial of their lift in the $3$-sphere.
Clearly, the essentiality of the HOMFLY-PT invariant  depends on the  assignment of a value to $J_{p,q}$  on the class
$\mathcal{TL}^*$: an assignment based on the lift makes the invariant much less sensitive in this direction.

\paragraph{Behavior under change of orientation}

What happens to the HOMFLY-PT invariant  when  we change the orientation of every component of the link? In the case of $\s3$,
the  classic HOMFLY-PT polynomial does not change, but, in  $L(p,q)$ things are different since $L(p,q)$ is homologically non-trivial. 

\begin{prop}\label{HomOrCh}
Let $L$ be a link in $L(p,q)$ and denote with $-L$ the link obtained by reversing the orientation of  each component. 
If the HOMFLY-PT invariant of $L$ can be written as $J_{p,q}(L)= \sum a^{k}z^{h} J_{p,q}(U_{i_0,i_{p-1},i_{p-2},\ldots, i_1})$, 
then $J_{p,q}(-L)=\sum a^{k}z^{h} J_{p,q}(U_{i_0,i_1,\ldots,i_{p-2},i_{p-1}})$.
\end{prop}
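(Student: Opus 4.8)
The plan is to exploit the fact that the entire skein-theoretic reduction used to compute $J_{p,q}$ is insensitive to reversing all orientations, so that the only effect of passing from $L$ to $-L$ is a relabelling of the homology classes of the trivial links that appear as the leaves of the reduction. The hypothesis $J_{p,q}(L)=\sum a^k z^h J_{p,q}(U_{i_0,i_{p-1},\ldots,i_1})$ is exactly the output of Theorem \ref{Corn}: iterated application of the skein relation $a^{-p}J_{p,q}(L_+)-a^p J_{p,q}(L_-)=zJ_{p,q}(L_0)$, together with the two multiplicative base relations, expresses $J_{p,q}(L)$ as a $\mathbb Z[a^{\pm1},z^{\pm1}]$-linear combination of values on trivial links.

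First I would record the key orientation observation. At any double point, reversing the orientation of both strands leaves the sign of the crossing unchanged and carries the oriented smoothing to the oriented smoothing; hence the skein triple $(L_+,L_-,L_0)$ is sent to $(-L_+,-L_-,-L_0)$ and in particular $(-L)_0=-(L_0)$. Consequently the very same sequence of skein moves, applied to $-L$, produces an identical tree with identical coefficients $a^k z^h$, whose leaves are now the orientation-reversals of the trivial links occurring for $L$; the base values, being pure elements of $\mathbb Z[a^{\pm1},z^{\pm1}]$ independent of orientation, are untouched. This yields $J_{p,q}(-L)=\sum a^k z^h J_{p,q}(-U_{i_0,i_{p-1},\ldots,i_1})$, and it remains only to identify each reversed trivial link. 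Here I would use that a knot $K$ with $[K]=j\in H_1(L(p,q))\cong\mathbb Z_p$ satisfies $[-K]=-j=p-j$, so reversing every component fixes the nullhomologous components and sends a $j$-homologous component to a $(p-j)$-homologous one. Thus $-U_{i_0,i_{p-1},\ldots,i_1}$ has, in homology class $k$, exactly $i_{p-k}$ components, i.e.\ it is $U_{i_0,i_1,\ldots,i_{p-2},i_{p-1}}$; substituting this into the displayed identity gives the claim.

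The main point to verify — and the only step that is not purely formal — is the geometric assertion that the orientation-reversal of a trivial link is again \emph{trivial}, with homology multiplicities reflected by $j\mapsto p-j$. I expect this to be the principal obstacle, and I would settle it at the level of grid diagrams: since the orientation runs from the $\mathbb X$ markings to the $\mathbb O$ markings, reversing orientation is realized by the exchange $\mathbb X\leftrightarrow\mathbb O$, and one checks that applying this swap to a grid diagram of the defining shape (markings on the principal diagonals of the boxes, the $O$'s gathered in the first box, the contiguity and staircase conditions) again produces, after a possible cyclic permutation of the columns, a diagram of that same defining shape, in which each component formerly of class $j$ now represents class $p-j$. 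Everything else — the coefficients and the combinatorial structure of the skein tree — is forced to coincide by the orientation-insensitivity of the skein relation established above.
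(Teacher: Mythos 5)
Your first step coincides exactly with the paper's: reversing all orientations preserves each skein triple, so the reduction tree for $-L$ is the one for $L$ with identical coefficients and with every leaf replaced by its orientation reversal, leaving only the identification of $-U_{i_0,i_{p-1},\ldots,i_1}$. Your homology observation ($[-K]=p-[K]$, so the multiplicities are forced) is a genuine addition not in the paper, but note what it does and does not buy: it determines \emph{which} trivial link $-U_{i_0,i_{p-1},\ldots,i_1}$ must be \emph{provided} it is trivial at all, and, as you say yourself, the whole geometric content of the proposition sits in that triviality claim.

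That is where your argument has a gap: the verification you propose would fail. After exchanging $\mathbb{O}\leftrightarrow\mathbb{X}$, the new $O$-markings sit in the boxes formerly occupied by the $X$-markings, hence in several \emph{different} boxes as soon as the link has components in more than one nonzero homology class; but a cyclic permutation of columns shifts all markings by the same amount and therefore preserves the cyclic column-distance between any two of them, so it can never gather into one box (a block of $n$ consecutive columns) two markings at cyclic distance $\geq n$. This situation is unavoidable: already for $U$ with one class-$1$ and one class-$2$ component in $L(4,1)$ (grid number $2$), the staircase condition in the definition of trivial links forces the two $X$-markings to lie at cyclic column-distance $3\geq 2$, so after the swap no cyclic permutation restores the condition ``all $O$-markings in the first box.'' The repair is exactly the paper's proof (Figure \ref{orrev}): one must first regather the markings by a sequence of non-interleaving row commutations, then non-interleaving column commutations — these are honest isotopy moves on grid diagrams — and only then finish with cyclic permutations of columns. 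With that replacement your argument is complete, and your homology step even spares you from reading the multiplicities off the final diagram as the paper does.
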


\begin{figure}[h!]
\begin{center}
\includegraphics[width=12cm]{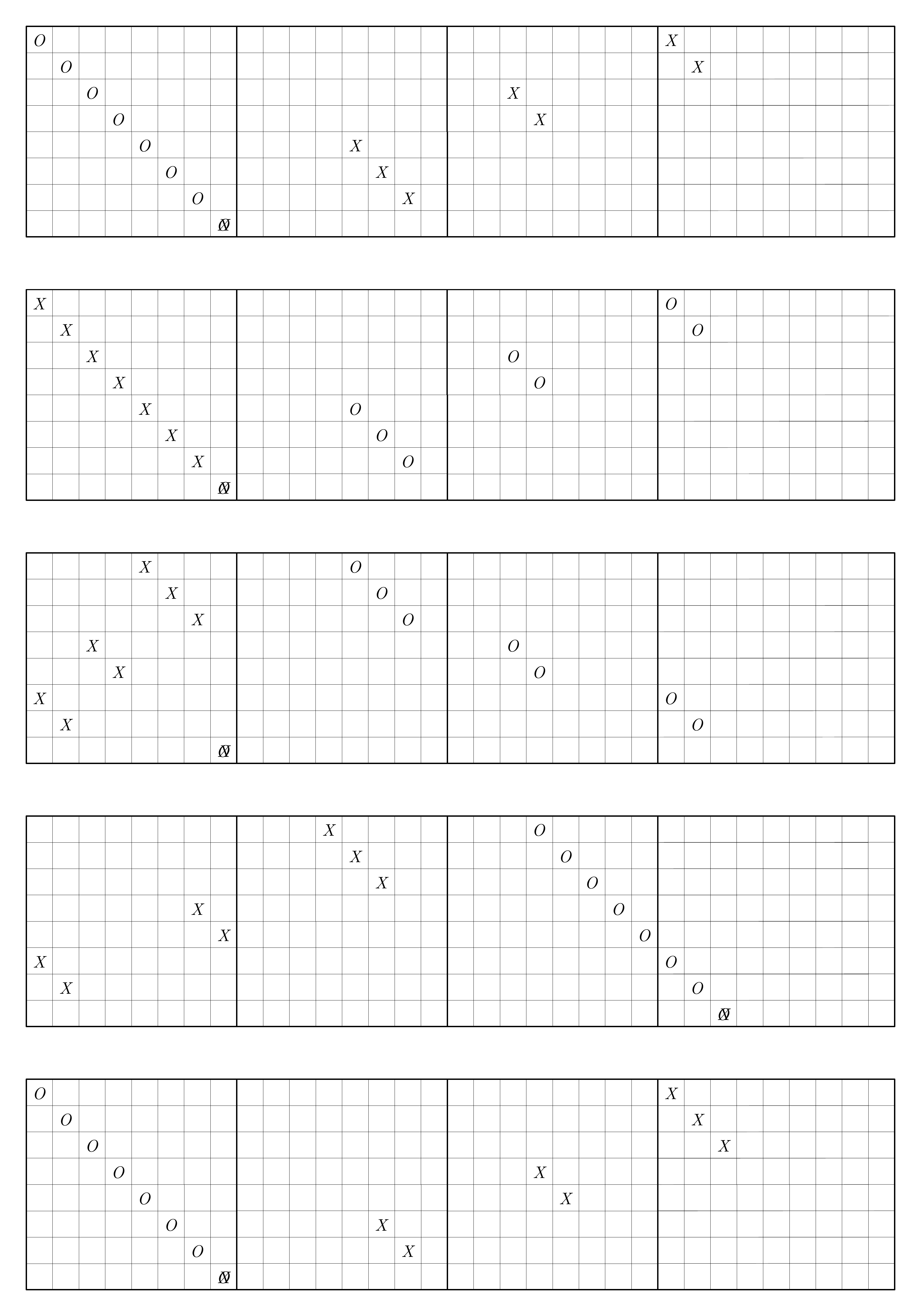}
\caption[legenda elenco figure]{Reduction to trivial link of   $-U_{1,2,2,3}$ in $L(4,1)$.}\label{orrev}
\end{center}
\end{figure}

\begin{proof}
As for the HOMFLY-PT polynomial for links in the 3-sphere, the skein reduction of both $L$ and $-L$ is the same,
because if we change the orientation in $L_{+}$, $L_{-}$ and $L_{0}$ we still get respectively $L_{+}$, $L_{-}$ and $L_{0}$.
But if we change the orientation in the trivial links, then we find a different trivial link; more precisely, looking at
Figure \ref{orrev}, if we change the orientation on the trivial link $U_{i_0,i_{p-1},i_{p-2},\ldots, i_1}$, 
and perform  at first a sequence of non-interleaving row commutations, then, 
a sequence of non-interleaving column commutations and finally some cyclic permutation of columns
we obtain the trivial link $U_{i_0,i_1,\ldots,i_{p-2},i_{p-1}}$.
\end{proof}

Usually,  in $L(p,q)$, the links $L$ and $-L$ are    non equivalent (since they are generally homologically different). So, the
last proposition suggests a way to construct examples of non-equivalent oriented links with the same lifting in $\s3$, distinguished by the 
HOMFLY-PT invariant. Indeed it is enough to find a link $L$ lifting to an invertible link and such that $L$ is non isotopic to $-L$. For example, the knots $K$ 
and $-K$ in $L(3,1)$ in Figure 
\ref{fig_banale}
are different since the first one is 1-homologous whereas the second one is 2-homologous, but they both lift to the trivial knot in $\s3$.

\begin{figure}[h!]
\begin{center}
\includegraphics[width=7cm]{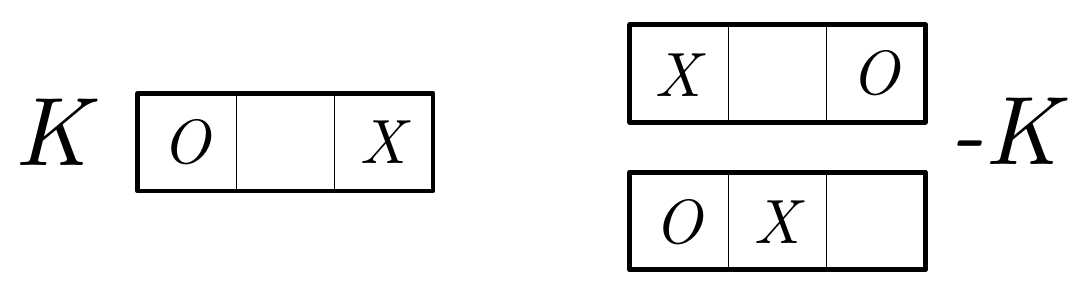}
\caption[legenda elenco figure]{Knots $K$ and $-K$ in $L(3,1)$ both lifting to the trivial knot in $\s3$.}\label{fig_banale}
\end{center}
\end{figure}

But what does it  happen if the links with the same  lift don't differ only from an orientation change? In  \cite{M} the author finds  many examples of different links
in $L(p,q)$ with the same covering in $\s3$. We end the section by  computing the HOMFLY-PT invariant of some of them. The first two examples are 
quite simple, since they are pairs of different trivial links: having the same HOMFLY-PT invariant or not depends on how we define 
 $J_{p,q}$ on $\mathcal{TL}^{\ast}$. On the contrary, in the third example,  that is  much more complicated, the two links are distinguished by the HOMFLY-PT polynomial.

\begin{ese}
\label{perlorenzo}
The two knots  of Figure \ref{CE1grid} are $K_1$ and $K_2$ in $L(5,2)$. They are different since $K_1$ is 1-homologous, 
while $K_2$ is 2-homologous, but they both lift to the trivial knot in $\s3$ (see \cite{M}).
Using Proposition \ref{DG}, we get \hbox{$K_1=U_{0,0,0,0,1}$} and $K_2=U_{0, 0, 0,1,0}$ in $L(5,2)$.  So, if we assume 
\hbox{$J_{p,q}(L):=J_{1,0}(\widetilde{L})$} on  trivial links, we clearly have
$J_{p,q} (K_{1})=1=J_{p,q} (K_{2}).$ It is possible to  generalize this example to $L \left (p, \frac{p \pm 1}{2} \right )$ (see \cite{M}).
\begin{figure}[h!]                      
\begin{center}                         
\includegraphics[width=8cm]{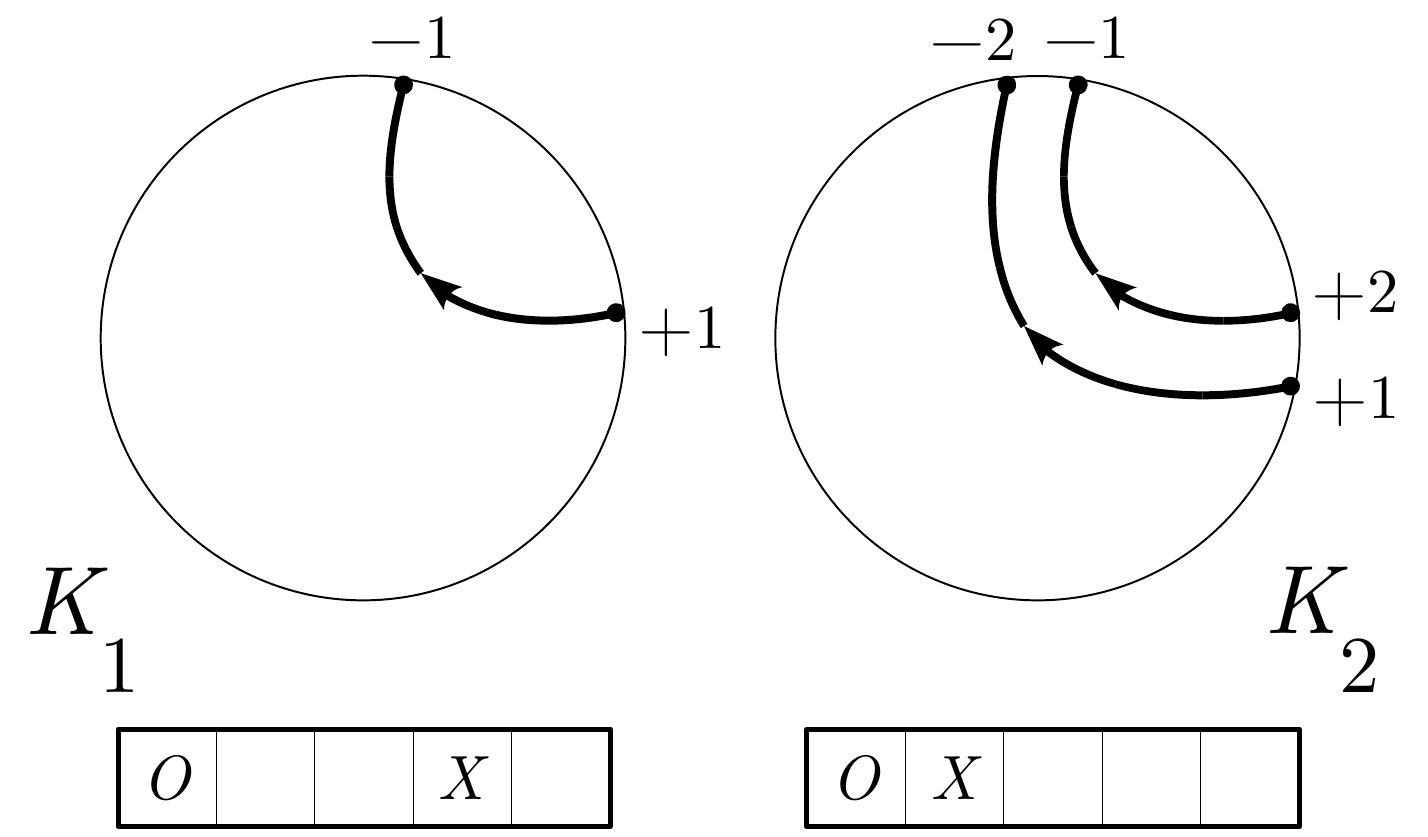}
\caption[legenda elenco figure]{Diagrams for different knots in $L(5,2)$ with trivial lift.}\label{CE1grid}
\end{center}
\end{figure}
\end{ese}

\begin{ese}
\label{perlorenzo2}
The two links $L_A,L_B\subset L(4,1)$ represented in Figure \ref{CE2grid} are non-equivalent  since the first one 
is a knot, whereas the second one is a two component link.
Nevertheless, they both lift to the Hopf link  in $\s3$ (see \cite{M}).
Transforming the disk diagram into a grid diagram (see Proposition \ref{DG}) and 
performing  some destabilizations and non-interleaving commutations, we see that they are nothing else than the trivial links 
$L_A=U_{0,0,1,0}$ and $L_B=U_{0,1,0,1}$.   So, if we assume 
$J_{p,q}(L):=J_{1,0}(\widetilde{L})$ on  trivial links, we clearly have
$J_{4,1} (L_{A})=a z + a z^{-1}-a^{3}z^{-1}=J_{4,1} (L_{B}).$
 \begin{figure}[h!]                      
\begin{center}                         
\includegraphics[width=10cm]{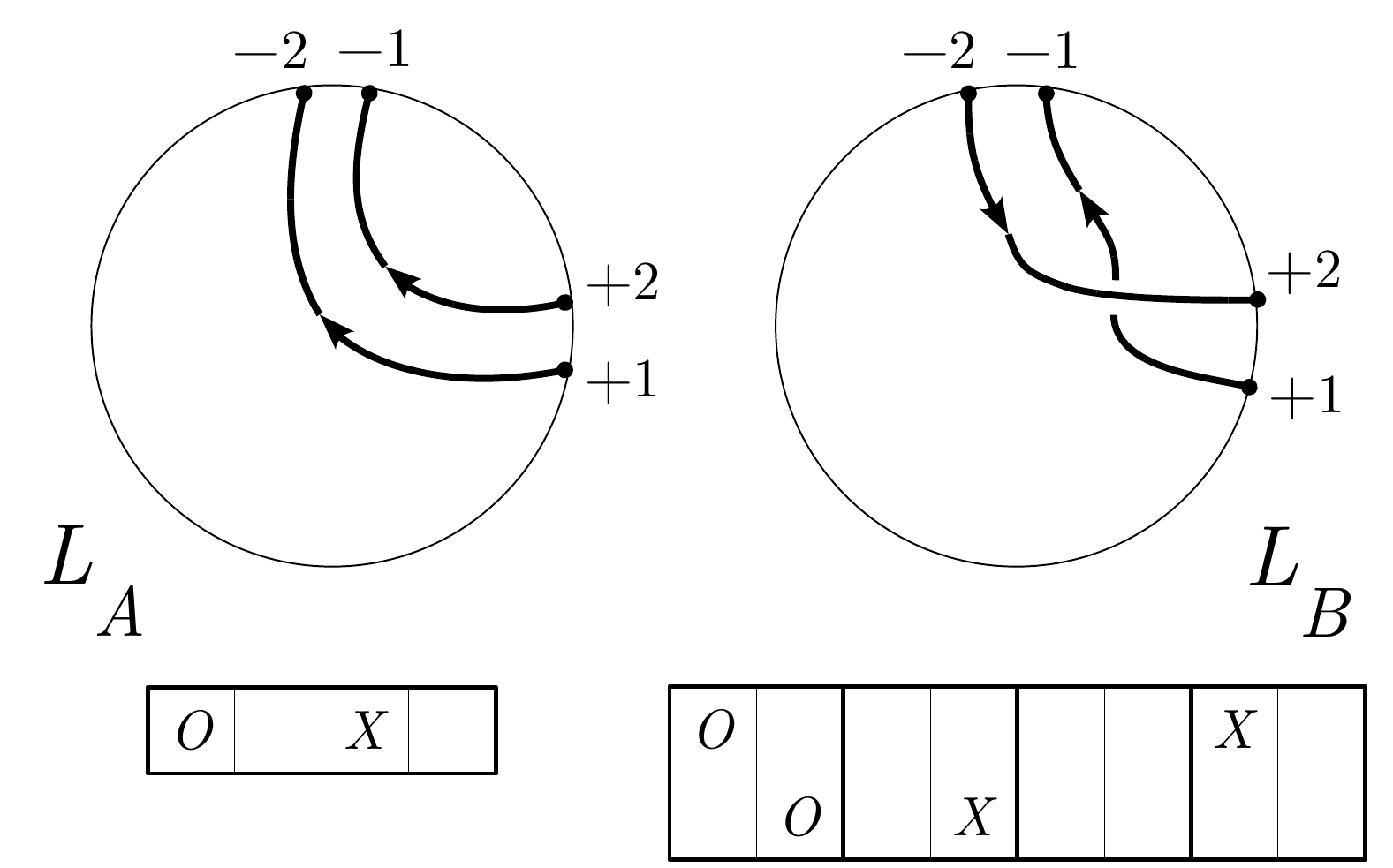}
\caption[legenda elenco figure]{Diagrams for different links in $L(4,1)$ with Hopf link lift.}\label{CE2grid}
\end{center}
\end{figure}
\end{ese}

\begin{ese}
The two  links $A_{2,2}$ and $B_{2,2}$ in $L(4,1)$ depicted in Figure \ref{CE3grid} are non equivalent, having different Alexander polynomial,
but  they both lift to the Hopf link in $\s3$ (see \cite{M}). 
The computation of their   HOMFLY-PT invariant is very long.
The skein reduction tree is quite big, so we report here only the final result
\begin{eqnarray*}
J_{4,1} (A_{2,2})&=&
(a^{24}+3 a^{24} z^2+ a^{24} z^4) J_{4,1} (U_{0,0,2,0}) + \\
&\ &+ (3 a^{28} z+4 a^{28} z^3+ a^{28} z^5) J_{4,1} (U_{1,0,0,0}) + \\
&\ &+ (3 a^{24} z^2 + 4a^{24}  z^4+a^{24} z^6) J_{4,1} (U_{0,1,0,1}) \\
 J_{4,1} (B_{2,2})&
= & (a^{24}+2a^{24}z^2+a^{24}z^4) J_{4,1} (U_{0,0,2,0})+\\
&\ & +(a^{28}z+2a^{28}z^3+a^{28}z^5)J_{4,1} (U_{1,0,0,0})+\\
&\ & +(a^{24}z^2+2a^{24}z^4+a^{24}z^6)J_{4,1} (U_{0,1,0,1})+\\
&\ &+(a^{20}z+2a^{20}z^3)J_{4,1} (U_{0,2,1,0})+\\
&\ &+ a^{20}  z J_{4,1} (U_{0,0,1,2})+ a^{24}  z^2 J_{4,1} (U_{0,2,0,2}). 
\end{eqnarray*}

\begin{figure}[h!]                      
\begin{center}                         
\includegraphics[width=9cm]{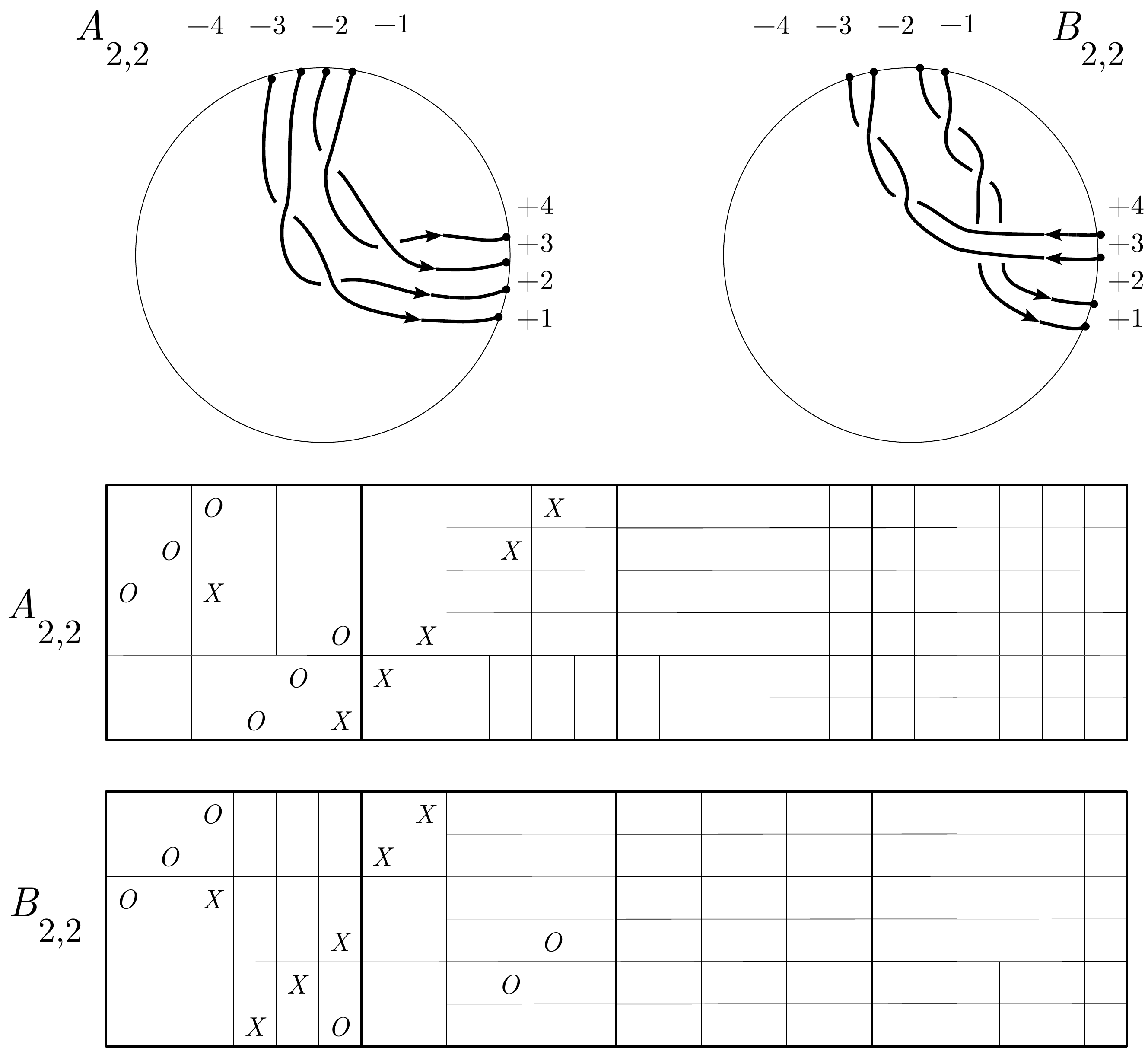}
\caption[legenda elenco figure]{Grid diagrams for different links in $L(4,1)$ with Hopf link lift.}\label{CE3grid}
\end{center}
\end{figure}

The lift of $U_{0,1,0,1}$ is the Hopf link, the lift of $\ U_{1,0,0,0}$ is the trivial link with four
component and  $ U_{0,2,1,0},\ U_{0,2,0,2},\ U_{0,0,1,2},\ U_{0,0,2,0}$ lif to   the closure of the braid  $\Delta_4^{2}$, where $\Delta_4$ denotes the Garside
braid on 4-strands (see \cite{M}). So, if we assume $J_{4,1}(L):=J_{1,0}(\widetilde{L})$ on trivial links, we get the following different  HOMFLY-PT polynomials
\begin{eqnarray*}
J_{4,1} (A_{2,2})&=&
 a^9 z^{-3} - 3 a^{11}z^{-3} + 
 3 a^{13} z^{-3} - a^{15} z^{-3} + 3 a^{25} z^{-2} - 9 a^{27} z^{-2} + \\ 
 &\ &+ 9 a^{29} z^{-2} - 3 a^{31} z^{-2} + 3 a^9 z^{-1} - 15 a^{11} z^{-1} + 
 21 a^{13} z^{-1} +
\\
& \ & - 9 a^{15} z^{-1} +
4 a^{25} - 12 a^{27}  +12 a^{29} - 4 a^{31}  
 + a^9 z - 25 a^{11} z + \\
 &\ & + 62 a^{13} z -  38 a^{15} z + 3 a^{25} z - 3 a^{27} z + a^{25} z^2 - 3 a^{27} z^2 + \\
 &\ &+ 3 a^{29} z^2 - a^{31} z^2 - 19 a^{11} z^3 + 102 a^{13} z^3 - 99 a^{15} z^3 + 
 7 a^{25} z^3+ \\
 &\ &- 4 a^{27} z^3 - 7 a^{11} z^5 +94 a^{13} z^5 - 155 a^{15} z^5 + 
 5 a^{25} z^5 - a^{27} z^5 +\\
 &\ & - a^{11} z^7 + 46 a^{13} z^7 - 129 a^{15} z^7 + 
 a^{25} z^7 +11 a^{13} z^9 - 56 a^{15} z^9+\\
 &\ &+ a^{13} z^{11} - 12 a^{15} z^{11} - 
 a^{15} z^{13}\\
 \end{eqnarray*}
 \begin{eqnarray*}
 J_{4,1} (B_{2,2})& =& 
a^9 z^{-3} - 3 a^{11}z^{-3} + 3 a^{13}z^{-3} - a^{15}z^{-3} + 
 2 a^5 z^{-2} - 6 a^7 z^{-2}+\\ 
&\ & + 6 a^9 z^{-2} - 2 a^{11} z^{-2} + a^{25} z^{-2} - 
 3 a^{27} z^{-2} + 3 a^{29} z^{-2} - a^{31} z^{-2} +\\
 &\ &+ 3 a^9 z{-1} - 15 a^{11} z^{-1} + 
 21 a^{13} z^{-1} -9 a^{15} z^{-1} + 
 2 a^5 - 18 a^7 + 30 a^9+\\
 &\ & - 14 a^{11} + 2 a^{25} - 6 a^{27} + 6 a^{29} - 
 2 a^{31} + 
 a^9 z - 25 a^{11} z + \\
&\ & + 62 a^{13} z - 38 a^{15} z +
  a^{25} z - a^{27} z - 20 a^7 z^2 + 70 a^9 z^2 +\\
  &\ &- 50 a^{11} z^2 + 
 a^{25} z^2 - 3 a^{27} z^2 + 3 a^{29} z^2 - a^{31} z^2 - 19 a^{11} z^3 + \\
&\ & + 102 a^{13} z^3 - 99 a^{15} z^3 + 3 a^{25} z^3 - 2 a^{27} z^3 - 10 a^7 z^4 
+ 88 a^9 z^4 +\\
&\ & - 110 a^{11} z^4 - 7 a^{11} z^5 + 94 a^{13} z^5 - 
 155 a^{15} z^5 + 3 a^{25} z^5 - a^{27} z^5 + \\
 &\ & - 2 a^7 z^6  + 58 a^9 z^6 - 
 128 a^{11} z^6 - a^{11} z^7 + 46 a^{13} z^7 - 129 a^{15} z^7 +\\ 
 &\ & + a^{25} z^7 + 
 18 a^9 z^8 - 74 a^{11} z^8 
 +11 a^{13} z^9 - 56 a^{15} z^9 + 2 a^9 z^{10} +\\ &\ &- 
 20 a^{11} z^{10} + a^{13} z^{11} - 12 a^{15} z^{11} - 2 a^{11} z^{12} - a^{15} z^{13}
 \end{eqnarray*}
\end{ese}
\end{section}

\begin{section}{Link Floer Homology in lens spaces}
In this section we generalize to the case of links a combinatorial description of the  hat version $\widehat{HFK}$ of the  Link Floer Homology developed
in \cite{BGH}
for knots in lens spaces. Then we compute it on some examples and discuss whether this invariant is  essential. We start by recalling some definitions.

\paragraph{The complex $\mathbf{(C(G) , \partial)}$}
Consider a grid diagram  $G=(T^2, \boldsymbol{\alpha} ,\boldsymbol{\beta} ,\mathbb{O} , \mathbb{X})$ representing an oriented knot in $L(p,q)$ and denote with $n$ 
its grid number.  
Following \cite{BGH}, we associate to $G$  a chain complex $(C(G) , \partial)$.
Let $\textbf{x}$ be an unordered $n$-uple of intersection points belonging to $\boldsymbol{\alpha} \cap \boldsymbol{\beta}$ such that each intersection point 
belongs to different curves  of $\boldsymbol{\alpha}$ and $\boldsymbol{\beta}$. Denote by $Y$ the set of these elements and 
let $C(G)$ be the $\mathbb{Z}_2$-module generated by the set $Y$.
Given $\mathbf x\in Y$,  we call   \textit{components} of $\textbf{x}$ the  points  of $\textbf{x}$ and we denote by $x_i$ 
the only component of $\textbf{x}$ laying on the $\alpha_i$ circle. 
If  $S_n$ is  the symmetric group on $n$ letters, 
there is a one to one correspondence between elements of $Y$ and those of $S_n \times \mathbb{Z}_p ^n$ (see Figure \ref{lorenzo1}).
Indeed, an element $( \sigma , (a_0, \ldots , a_{n-1} ) ) \in S_n \times \mathbb{Z}_p ^n$ corresponds to the only $\textbf{x}$ such that
\begin{itemize}
\item  $x_i$ lays on $\alpha_i \cap \beta_{\sigma (i)}$, for $i=0,\ldots,n-1$;
\item  $x_i$ is the $a_i$-th intersection of $\alpha_i \cap \beta_{\sigma (i)}$, for $i=0,\ldots,n-1$.
\end{itemize}
We use the notation $[c_0, \ldots ,c_{n-1}]$ to denote the permutation $\begin{pmatrix} 0 & \ldots & n-1 \\ c_0 & \ldots & c_{n-1} \end{pmatrix}$.

\begin {figure}[htb]
\begin{center}
 \includegraphics[width=15cm]{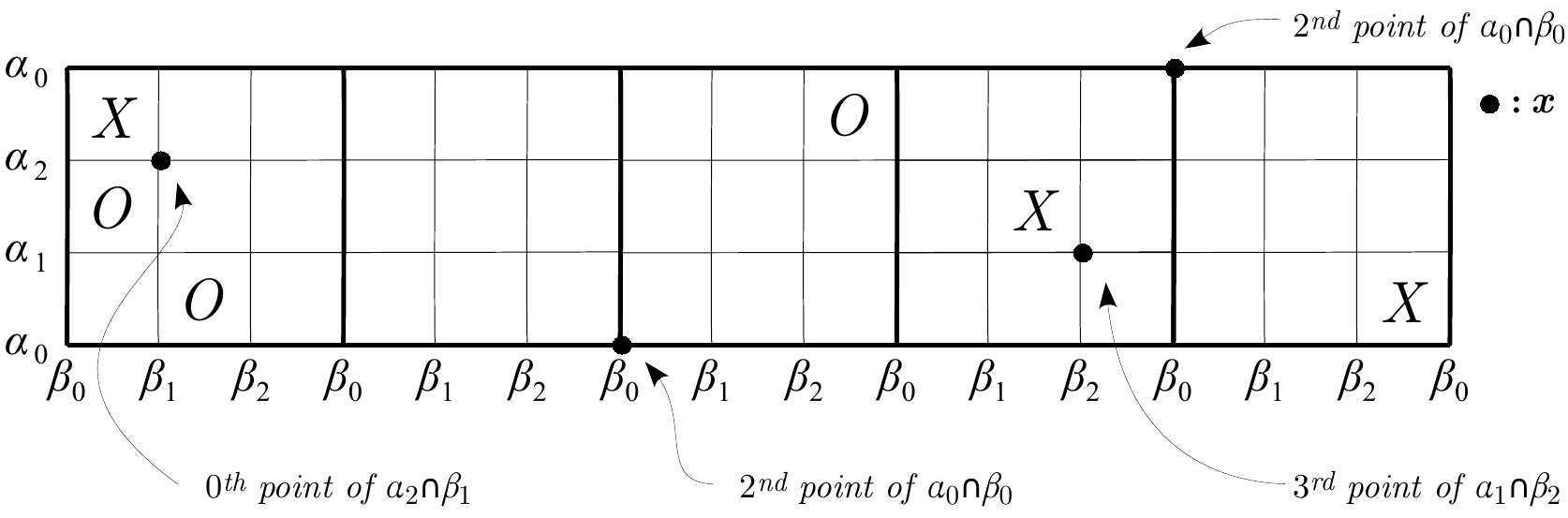}
 \end{center}
\caption {\textnormal{A generator $\mathbf x$ corresponding to $\{ [0 \ 2 \ 1] , (2,3,0) \} \in S_3 \times \mathbb{Z}_5 ^3$.}}\label{lorenzo1}
\end{figure}

Now we recall the definition of the  boundary operator. A \textit{parallelogram} is a quadrilateral properly embedded in $T^2$, that is a quadrilateral having
points of $\boldsymbol{\alpha} \cap \boldsymbol{\beta}$ as vertices and such that its sides coincide with arcs of curves belonging to $\boldsymbol{\alpha}$ or
$\boldsymbol{\beta}$.
Let $\textbf{x},\textbf{y} \in Y$ and let $P$ be a parallelogram; we say that a parallelogram $P$ \textit{connects} $\textbf{x}$ to $\textbf{y}$ if
\begin{itemize}
\item $\textbf{x}$ and $\textbf{y}$ differ for at  most two components $\{x_i,x_j\}$ and $\{y_i,y_j\}$ that are vertices of $P$;
\item  according to the orientation of $P$ induced by the one fixed on $T^2$, the sides of $P$ belonging to  $\boldsymbol{\alpha}$'s curves go from $\mathbf x$
vertices  to $\mathbf y$ ones.

\end{itemize}
We call $R(\textbf{x},\textbf{y})$ the set of parallelograms connecting $\textbf{x}$ to $\textbf{y}$.
We say that a parallelogram connecting $\textbf{x}$ to $\textbf{y}$ is \textit{admissibile} if its interior contains 
neither $\textbf{x}$ components nor $\textbf{y}$ ones.
For each pair of generators $\textbf{x},\textbf{y} \in Y$, we call $PG ( \textbf{x},\textbf{y})$ the set of admissible parallelograms 
connecting $\textbf{x}$ to $\textbf{y}$.
Given a parallelogram $P$, denote with  $n_{\mathbb{O}}(P)$ and $n_{\mathbb{X}}(P)$, respectively, the number of  $O$ markings and $X$ markings  belonging to $P$ 
(see Figure \ref{lorenzo2}).  
\begin {figure}[htb]
\begin{center}
 \includegraphics[width=14cm]{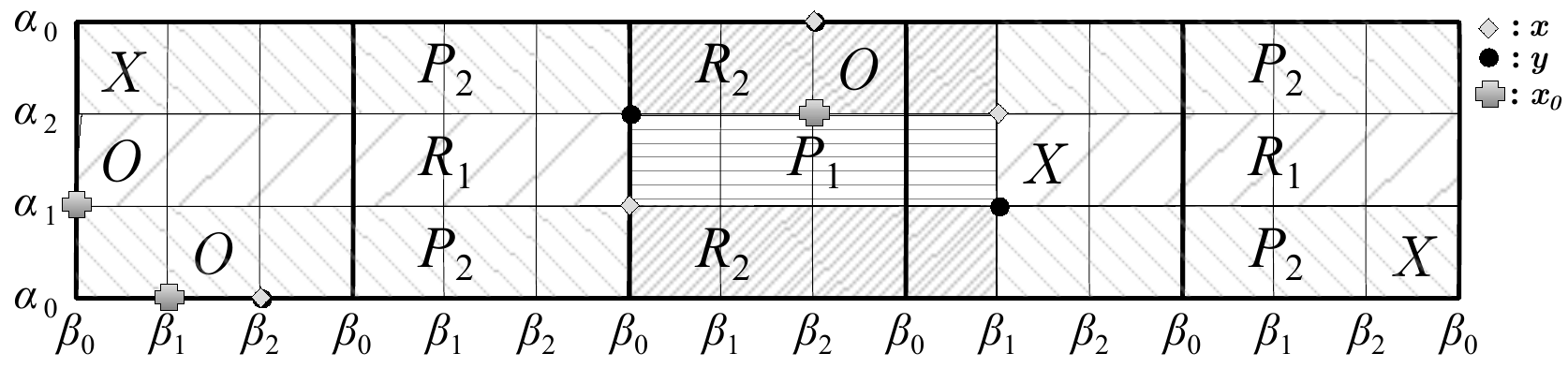}
\caption { \textnormal{Parallelograms $P_1$ and $P_2$ connect $\textbf{x}$ to $\textbf{y}$, while $R_1$ and $R_2$  connect $\textbf{y}$ to $\textbf{x}$.
Both $P_1$ and $R_1$ are admissible, while $P_2$ and $R_2$ are not. Moreover $n_{\mathbb{O}} (P_1) = n_{\mathbb{X}}(P_1)=0$, so the boundary operator
connects $\textbf{x}$ to $\textbf{y}$.}}\label{lorenzo2}
\end{center} 
\end{figure}\\
Now we are ready to define a boundary operator $\partial \colon C(G) \rightarrow C(G)$
\begin{eqnarray*}
\partial \textbf{x} = {\underset{\textbf{y} \in Y} {\sum}} \    {\underset{ \scriptsize{\left\{ \begin{array}{l} \ \ P \in PG (\textbf{x},\textbf{y}) : \\
  n_{\mathbb{O}} (P) = n_{\mathbb{X}} (P) =0 
  \end{array}\right\}}}{\sum} } \textbf{y}.
\end{eqnarray*} \\
Since $\partial^2=0$ (see \cite{BGH}), we can define the homology $H(C(G))$ associated to the chain complex $(C(G) , \partial)$,
obtaining  a bigraded $\mathbb{Z}_2$-vector space.

\paragraph{Defining degrees}
We can associate to each generator of $C(G)$ three different degrees: the spin degree, the Maslov degree and the Alexander degree.

Let $\textbf{x}_{\mathbb{O}} \in Y$ be the generator whose components are the  lower left vertices of the $n$ distinct parallelograms in $T^2 -\boldsymbol{\alpha
} - \boldsymbol{\beta}$ which contain elements of $\mathbb{O}$. Let $(\sigma_{\mathbb{O}} , (a_0, \ldots , a_{n-1} ) )$ be the element 
of $S_n \times \mathbb{Z}_p ^n$ corresponding to $\textbf{x}_{\mathbb{O}}$ and let $(\sigma, (b_0, \ldots , b_{n-1} ))$ be the element corresponding 
to a generic $\textbf{x}$. 
The \textit{spin degree}  is given by the function $\textbf{S}\colon Y \rightarrow \mathbb{Z}_p$ defined by
\begin{eqnarray}
\label{foS}
\textbf{S}(\textbf{x}) \equiv [q-1 + (    \underset{i=0}{ \overset{n-1} {\sum}} b_i -  \underset{i=0}{ \overset{n-1} {\sum}} a_i      ) ] \mod p.
\end{eqnarray}
The  \textit{Maslov degree} is the  function $\textbf{M}\colon Y\rightarrow \mathbb{Q}$ defined by 
\begin{equation}\label{foM}
\begin{array}{lll}
\textbf{M}(\textbf{x})&= &\dfrac{1}{p} ( I(\widetilde{W} (\textbf{x} ) ,\widetilde{W} (\textbf{x} )) - I(\widetilde{W} (\textbf{x} ) ,\widetilde{W} (\mathbb{O} )) -
I(\widetilde{W} (\mathbb{O} ) ,\widetilde{W} (\textbf{x} )) + \\
&\ &+ I(\widetilde{W} (\mathbb{O} ) ,\widetilde{W} (\mathbb{O} )) + 1 ) + d(p,q,q-1) + \dfrac{p-1}{p},
\end{array}
\end{equation}
where $d$ is a kind of normalization function depending only on lens space parameters, while $I$  and $\widetilde{W}$ are two functions  depending on the
arrangement of the $\mathbf x$ points with respect to the $\mathbb O$
points (for details see the Appendix).\\

Finally, the \textit{Alexander degree} is a  function $\textbf{A}\colon Y\rightarrow \mathbb{Q}$ defined by 
\begin{align}\label{foA}
\textbf{A}(\textbf{x}) = \dfrac{1}{2} ( \textbf{M}_{\mathbb{O} } ( \textbf{x}) - \textbf{M}_{\mathbb{X} } ( \textbf{x})- (n-1)),
\end{align}
where $\textbf{M}_{\mathbb{O} }$ is the Maslov degree and  $\textbf{M}_{\mathbb{X} }$ is the degree obtained by replacing $\mathbb O$ with $\mathbb X$ in 
formula (\ref{foM}). 

The  following equations show how the boundary operator relate with these degrees
\begin{eqnarray}
\label{fod}
\mathbf{S} ( \partial (\mathbf{x})) = \mathbf{S} (\mathbf{x})\qquad
\mathbf{M} ( \partial (\mathbf{x})) = \mathbf{M} (\mathbf{x})-1 \qquad
\mathbf{A} (\partial (\mathbf{x})) = \mathbf{A} (\mathbf{x}).
\end{eqnarray}

\paragraph{Knot Floer Homology}
Let $V$ be a bidimensional $\mathbb{Z}_2$-vector space spanned by a vector  with Maslov-Alexander bigrading $(-1,-1)$ and another one
 with Maslov-Alexander bigrading $(0,0)$.
\begin{prop}[\cite{BGH, MOS}]\label{HFK}
Consider a  grid diagram $G_K$ of an oriented knot $K \subset L(p,q)$. Then $H(C(G_K) , \partial)$ 
is isomorphic to the bigraded group $\widehat{HFK} (K) \otimes V^{\otimes (n-1)}$, where $n$ is the grid number of $G$. 
\end{prop}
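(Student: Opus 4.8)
The plan is to identify the combinatorial complex $(C(G_K),\partial)$ with the knot Floer chain complex obtained by regarding the grid diagram as a multi-pointed Heegaard diagram for the pair $(L(p,q),K)$, and then to strip off the redundant basepoints by a stabilization argument; this is the content of \cite{MOS} for $\s3$ and of \cite{BGH} for knots in lens spaces, and I only recall the main steps. First I would view $G_K=(T^2,\boldsymbol{\alpha},\boldsymbol{\beta},\mathbb{O},\mathbb{X})$ as a genus-one multi-pointed Heegaard diagram: the solid tori $V_{\boldsymbol{\alpha}}$ and $V_{\boldsymbol{\beta}}$ give the Heegaard splitting of $L(p,q)$, the $n$ pairs of markings $(\mathbb{O},\mathbb{X})$ encode the oriented knot $K$, and the generators $\mathbf{x}\in Y$ are exactly the points of $\mathbb{T}_{\boldsymbol{\alpha}}\cap\mathbb{T}_{\boldsymbol{\beta}}$ in $\mathrm{Sym}^{n}(T^2)$. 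The bijection $Y\leftrightarrow S_n\times\mathbb{Z}_p^{\,n}$ together with the spin degree (\ref{foS}) then correspond to the splitting of the generators over the $\mathrm{Spin}^c$ structures of $L(p,q)$, which are indexed by $H_1(L(p,q))\cong\mathbb{Z}_p$.

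The crux is to show that the differential $\partial$, counting empty parallelograms that avoid $\mathbb{O}$ and $\mathbb{X}$, coincides with the holomorphic differential of the associated knot Floer chain complex. Following \cite{MOS}, the key point is that on a grid diagram every domain joining two generators which has Maslov index one, non-negative local multiplicities and vanishing multiplicity at all basepoints is an embedded rectangle, and each such rectangle admits a unique pseudo-holomorphic representative; hence the combinatorial count of empty parallelograms reproduces the $\mathbb{Z}_2$-count of index-one holomorphic disks, so $\partial$ is the Floer differential. I would then verify that the Maslov grading (\ref{foM}) and the Alexander grading (\ref{foA}) reproduce the Heegaard Floer gradings, using the index function $I$, the map $\widetilde{W}$ and the shift $d(p,q,q-1)$ recalled in the Appendix; the relations (\ref{fod}) are precisely the statements that $\partial$ lowers the Maslov degree by one and preserves both the Alexander and the spin degrees.

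Finally I would account for the factor $V^{\otimes(n-1)}$. A grid diagram of grid number $n$ uses $n$ basepoints of each type, whereas a minimal doubly-pointed diagram computing $\widehat{HFK}(K)$ uses only one of each; the remaining $n-1$ basepoint pairs are index stabilizations of the Heegaard diagram. By the stabilization invariance of knot Floer homology, each such pair tensors the homology with the two-dimensional space $V$ whose generators sit in Maslov--Alexander bigradings $(0,0)$ and $(-1,-1)$, yielding $H(C(G_K),\partial)\cong\widehat{HFK}(K)\otimes V^{\otimes(n-1)}$ as bigraded $\mathbb{Z}_2$-vector spaces.

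The main obstacle is the second step: proving that the naive count of empty parallelograms genuinely computes the holomorphic differential, i.e.\ that the relevant moduli spaces are transversally cut out and that no domain other than an embedded rectangle contributes in index one. In the lens space setting this requires redoing the rectangle analysis of \cite{MOS} while keeping track of the $\boldsymbol{\beta}$ curves wrapping $p$ times around $T^2$ and of the $\mathbb{Z}_p$ spin refinement, which is exactly what is carried out in \cite{BGH}; for a knot $K$ the proposition therefore follows directly from their work.
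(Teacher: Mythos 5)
Your proposal is correct and consistent with the paper's treatment: the paper states Proposition \ref{HFK} as a result imported from \cite{BGH} and \cite{MOS} and gives no proof at all, and your outline is a faithful reconstruction of the argument in those references (grid diagram as multi-pointed Heegaard diagram, spin degree as the $\mathrm{Spin}^c$ splitting, empty parallelograms computing the index-one holomorphic count, and the $V^{\otimes(n-1)}$ factor from the extra basepoint pairs), deferring the same technical core --- the rectangle/transversality analysis on the lens-space grid --- to exactly the works the paper cites.
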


\paragraph{Link Floer Homology}

In this paragraph we generalize the combinatorial computation of $\widehat{HFL}$ to the case of links. 
Let $L \subset L(p,q)$ be an oriented link with $l$ components $L_1,\ldots,L_l$ and let
$G_L=(T^2, \boldsymbol{\alpha} ,\boldsymbol{\beta} ,\mathbb{O} , \mathbb{X})$ be a grid diagram of it. 
Let $k_j$ be the number of $\mathbb O$ markings belonging to $L_j$ (which is equal to the one of $\mathbb X$ markings) and denote 
elements of  $\mathbb{O}$ or $\mathbb{X}$ 
belonging to $L_j$ with as   $O_{j,i}$ or  $X_{j,i}$ for  $j=1, \ldots ,l$
and  $i=1, \ldots ,k_j$.
The generators of $C(G_L)$, the boundary operator, the spin degree and the Maslov degree are defined as in the case of knots. Instead, the Alexander degree becomes 
a multidegree
 as follows. Consider the set $\mathbb{O}_j$ composed by  elements of $\mathbb{O}$ belonging  to  $L_j$. 
 Let $\textbf{M}_{\mathbb{O}_j } ( \textbf{x})$ be the Maslov degree of $\textbf{x}$, computed with respect to $\mathbb{O}_j$.
The Alexander multidegree is the function $\textbf{A}\colon Y \rightarrow {\mathbb{Q}}^l$ defined by
\begin{align*}
\textbf{A}(\textbf{x}) = \dfrac{1}{2} \Big( \textbf{M}_{\mathbb{O}_1 } ( \textbf{x}) - \textbf{M}_{\mathbb{X}_1 } ( \textbf{x})- (n_1 -1), \ldots ,  \textbf{M}_{\mathbb{O}_l } \big( \textbf{x}) - \textbf{M}_{\mathbb{X}_l } ( \textbf{x})- (n_l -1)\Big).
\end{align*}
As in the case of knots, we can define the homology $H(C(G_L) , \partial)$ of the chain complex $(C(G_L) , \partial)$.
For $j=1,\ldots , l$, let $V_j$ be a bidimensional $\mathbb{Z}_2$-vector space, spanned by  a vector  with Maslov-Alexander multidegree 
$(0,(0,0, \ldots , 0) )$ and  another one with multidegree $(-1,- \vec{e_j})$, where $\vec{e_j}$ indicates the $j$-th vector of  the  canonical
basis of $\mathbb{R}^l$.
\begin{prop}
\label{proplorenzo1}
Let  $L$ be an oriented link in $L(p,q)$, and let $G_L$ be a  grid diagram of $L$. Denote with $L_1,\ldots,L_l$ the components of $L$. Then
\begin{eqnarray*}
 H(C(G_L) , \partial) \cong \widehat{HFL} (L) \otimes \underset{j=1}{ \overset{l} {\bigotimes}} V_j^{\otimes (k_j -1)}, 
\end{eqnarray*}
where $k_j$ is the number of  $O$ markings belonging to $L_j$.
\end{prop}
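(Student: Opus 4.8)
The plan is to run, in the link setting, the very argument that proves the knot case (Proposition~\ref{HFK}), combining the holomorphic identification of \cite{BGH} with the passage from knots to links of \cite{MOS}. The key observation is that the chain complex $(C(G_L),\partial)$, its generating set $Y$, the boundary operator, the Maslov degree $\mathbf M$ of (\ref{foM}) and the spin degree $\mathbf S$ of (\ref{foS}) are all built from the grid data of $G_L$ alone and make no reference to how the markings are distributed among the components $L_1,\dots,L_l$. Hence every step of \cite{BGH} identifying $(C(G_L),\partial)$ with a Heegaard Floer chain complex carries over verbatim, and the only genuinely new feature is the refinement of the single Alexander degree $\mathbf A$ of (\ref{foA}) into the multidegree $\mathbf A\colon Y\to\mathbb Q^l$.

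First I would regard $G_L$ as a multi-pointed genus-one Heegaard diagram $\mathcal H$ for the pair $(L(p,q),L)$, in which the $k_j$ markings of $\mathbb O$ and the $k_j$ markings of $\mathbb X$ lying on $L_j$ play the role of basepoints. By the identification of the combinatorial differential (counting admissible parallelograms with $n_{\mathbb O}(P)=n_{\mathbb X}(P)=0$) with the count of Maslov index-one holomorphic disks established in \cite{BGH} for lens spaces, the homology $H(C(G_L),\partial)$ is isomorphic to the multi-pointed link Floer homology of $\mathcal H$, and the combinatorial degrees $\mathbf M$ and $\mathbf A$ agree with the Heegaard Floer Maslov and Alexander multigradings. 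This step needs no modification beyond reading the disk count componentwise, since the differential itself is unchanged.

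Next I would run the basepoint-reduction argument of \cite{MOS}: removing a redundant pair $(O_{j,i},X_{j,i})$ lying on a single component $L_j$ relates the complex of $\mathcal H$ to that of the diagram with one fewer pair on $L_j$, at the cost of a tensor factor. The heart of the proof is to check that this factor is precisely the graded vector space $V_j$, whose two generators sit in Maslov-Alexander multidegrees $(0,(0,\dots,0))$ and $(-1,-\vec{e_j})$, rather than a uniform copy of $V$ as in the knot case. This is where the multidegree must be examined with care: using (\ref{foM}) together with the definitions of $\mathbf M_{\mathbb O_j}$ and $\mathbf M_{\mathbb X_j}$, one must verify that the Alexander shift produced by the destabilization is concentrated in the $j$-th slot. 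Iterating the reduction $k_j-1$ times on each component brings $\mathcal H$ to the minimal diagram with a single pair of basepoints per component, whose homology is $\widehat{HFL}(L)$ by definition, and produces exactly the factor $\bigotimes_{j=1}^l V_j^{\otimes(k_j-1)}$.

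The main obstacle I anticipate is precisely this componentwise grading bookkeeping in the reduction step: the destabilization lemmas of \cite{MOS} are stated for $\s3$, and although the reduction is a local operation near a basepoint pair — so that the global lens-space features (the slope $-p/q$ of the $\boldsymbol\beta$ curves, the $p$ boxes, the spin degree $\mathbf S$) do not interfere — one must confirm that the local model, and the resulting shifts $(-1,-\vec{e_j})$, are unaffected by the quotient defining $L(p,q)$ and that $-\vec{e_j}$ lands in the correct Alexander coordinate. Once this is settled the isomorphism follows, and it specializes to Proposition~\ref{HFK} when $l=1$ and $k_1=n$.
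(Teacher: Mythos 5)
Your proposal is correct and follows essentially the same route as the paper: the paper's entire proof consists of observing that Proposition 7.2 of \cite{OS2} (extra basepoint pairs contribute tensor factors $V_j$ with the stated grading shifts) remains valid over lens spaces, and then running the argument of Proposition 2.5 of \cite{MOS} --- precisely the holomorphic identification via \cite{BGH} followed by componentwise basepoint reduction that you describe. Your write-up simply makes explicit the grading bookkeeping that the paper leaves implicit in those citations.
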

\proof
Since Proposition 7.2 of \cite{OS2} holds also in the case of lens spaces, by using an argument similar to the one used in proof of Proposition 2.5 of \cite{MOS}, we can conclude.
\endproof

\paragraph{Behavior under change of orientation}
Let $G_K$ be a grid diagram  of an oriented  knot $K\subset L(p,q)$ and let $-G_K$ be a grid diagram of $-K$, 
obtained  exchanging the elements of $\mathbb{O}$ and $\mathbb{X}$ in $G_K$.
\begin{prop}
\label{HFLorient}
There is a one to one correspondence between the generators of $H(C(G_K),\partial)$ and those of $H(C(-G_K),\partial)$:
a generator of  $H(C(G_K),\partial)$ having spin degree $s$, Maslov degree $m$ and Alexander degree $a$ corresponds to a generator
of  $H(C(-G_K),\partial)$ with spin degree $s+k$, Maslov degree $m-2a-(n-1)$ and Alexander degree $-a-(n-1)$, where $k$ is a fixed 
integer and $n$ denote the grid number of $G_K$.
\end{prop}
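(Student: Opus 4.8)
The plan is to observe first that passing from $G_K$ to $-G_K$ alters neither the set of generators nor the boundary operator, so that the claimed one-to-one correspondence is simply the identity on $Y$, and then to track how the three degrees are modified by the swap $\mathbb{O}\leftrightarrow\mathbb{X}$. Indeed, the set $Y$ depends only on the triple $(T^2,\boldsymbol{\alpha},\boldsymbol{\beta})$, which is untouched by the move; and the definition of $\partial$ counts admissible parallelograms $P$ with $n_{\mathbb{O}}(P)=n_{\mathbb{X}}(P)=0$, a condition manifestly symmetric under exchanging $\mathbb{O}$ and $\mathbb{X}$. Hence $(C(G_K),\partial)$ and $(C(-G_K),\partial)$ are literally the same $\mathbb{Z}_2$-complex, and the identity map induces the asserted correspondence on homology. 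The real content of the statement is therefore the grading bookkeeping.

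For the spin degree I would note that in formula (\ref{foS}) only the reference tuple $(a_0,\dots,a_{n-1})$ associated with $\mathbf{x}_{\mathbb{O}}$ is marking-dependent; after the swap it is replaced by the tuple $(a'_0,\dots,a'_{n-1})$ associated with $\mathbf{x}_{\mathbb{X}}$, while the tuple $(b_0,\dots,b_{n-1})$ of the generic $\mathbf{x}$ is unchanged. Thus the spin degree in $-G_K$ differs from that in $G_K$ by $\sum_i a_i-\sum_i a'_i$ modulo $p$, a quantity independent of $\mathbf{x}$; this is precisely the fixed integer $k$.

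For the Maslov and Alexander degrees the key remark is that the Maslov degree computed in $-G_K$ with respect to its $\mathbb{O}$-markings, which are the original $\mathbb{X}$-markings, is exactly $\mathbf{M}_{\mathbb{X}}$ of $G_K$. Solving the Alexander formula (\ref{foA}) for $\mathbf{M}_{\mathbb{X}}$ gives $\mathbf{M}_{\mathbb{X}}(\mathbf{x})=\mathbf{M}(\mathbf{x})-2\mathbf{A}(\mathbf{x})-(n-1)$, so the new Maslov degree of a class with old degrees $(s,m,a)$ is $m-2a-(n-1)$. Likewise the new Alexander degree is $\frac{1}{2}\bigl(\mathbf{M}_{\mathbb{X}}(\mathbf{x})-\mathbf{M}_{\mathbb{O}}(\mathbf{x})-(n-1)\bigr)$, and substituting $\mathbf{M}_{\mathbb{X}}-\mathbf{M}_{\mathbb{O}}=-2a-(n-1)$ yields $-a-(n-1)$, as claimed.

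The proposition is not deep, so the only points demanding care are arithmetic and a descent argument. The genuine step to verify is that these degree shifts, defined a priori on generators, pass correctly to homology: this is guaranteed by (\ref{fod}), which shows that $\partial$ is homogeneous for all three degrees in either diagram, so the degrees descend to $H(C(\cdot),\partial)$ and the identity chain map carries a class of tridegree $(s,m,a)$ to one of tridegree $\bigl(s+k,\,m-2a-(n-1),\,-a-(n-1)\bigr)$. The one mild subtlety is the well-definedness of $\mathbf{x}_{\mathbb{X}}$ and hence of the constant $k$, which follows at once from the fact that $\mathbb{X}$ obeys the same row/column constraints as $\mathbb{O}$.
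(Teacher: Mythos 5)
Your proposal is correct and follows essentially the same route as the paper's proof: identify $C(G_K)$ with $C(-G_K)$ via the identity (noting the boundary condition $n_{\mathbb{O}}(P)=n_{\mathbb{X}}(P)=0$ is symmetric in the markings), observe that the new Maslov degree is $\mathbf{M}_{\mathbb{X}}$ of the old diagram and solve formula (\ref{foA}) for it, and define $k$ as the difference of the coordinate sums of $\mathbf{x}_{\mathbb{O}}$ and $\mathbf{x}_{\mathbb{X}}$. Your explicit remarks that the degree shifts descend to homology via (\ref{fod}) and that $\mathbf{x}_{\mathbb{X}}$ is well defined are left implicit in the paper, but they are minor refinements of the same argument.
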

\proof
Clearly the generators of $C(G_K)$ coincide with  those of $C(-G_K)$, but for simplicity's sake, given $\mathbf x\in C(G_K)$
we  denote with $-\mathbf x$ the same generator thought in
$C(-G_K)$. Moreover,  two generators $\mathbf{x},\mathbf{y} \in (C(G_K),\partial)$ are connected by 
the boundary operator if and only if $-\mathbf{x}$ and $-\mathbf{y}$ are connected by the boundary operator in the chain complex $(C(-G_K),\partial)$. 
Thus generators of $H(C(G_K),\partial)$ coincide with those of $H(C(-G_K),\partial)$. On the contrary,
$\mathbf x$ and $-\mathbf x$  have generally different degrees.
By definition we have  $$\mathbf M(\mathbf -x)=\mathbf{M_{\mathbb{O}}}(\mathbf{-x})=\mathbf{M_{\mathbb{X}}}(\mathbf{x})=
\mathbf M_{\mathbb O}(\mathbf x)-2A(\mathbf x)-(n-1)$$ and 

\begin{align*}
\mathbf{A}(-\mathbf{x})=\dfrac{1}{2}( \mathbf{M_{\mathbb{O}}}(-\mathbf{x}) - \mathbf{M_{\mathbb{X}}}(-\mathbf{x}) -(n-1) )=\dfrac{1}{2}( \mathbf{M_{\mathbb{X}}}(\mathbf{x}) - \mathbf{M_{\mathbb{O}}}(\mathbf{x}) -(n-1) )= \\
= \dfrac{1}{2}(      \mathbf{M_{\mathbb{O}}}(\mathbf{x}) -2 \mathbf{A} (\mathbf{x}) -(n-1) - \mathbf{M_{\mathbb{O}}}(\mathbf{x}) -(n-1) )= -\mathbf{A} (\mathbf{x}) -(n-1).
\end{align*}
 
Let  $\textbf{x}_{\mathbb{O}}=\{ \sigma_{\mathbb{O}} , (b_0, \ldots , b_{n-1} ) \}$ (resp. $\textbf{x}_{\mathbb{X}}=\{ \sigma_{\mathbb{X}} ,
(c_0, \ldots , c_{n-1} ) \}$) 
be the generators of $C(G_K)$ whose components are lower left vertices of the $n$ distinct parallelograms in $T^2-\mathbf{\alpha}-\mathbf{\beta}$ containing elements 
of $\mathbb O$ (resp. $\mathbb X$). Set $k:=\underset{i=0}{ \overset{n-1} {\sum}} b_i -  \underset{i=0}{ \overset{n-1} {\sum}} c_i   \mod p$. Then we have
 \begin{align*}
 \mathbf{S} (-\mathbf{x})= \mathbf{S} (\mathbf{x}) + k.
 \end{align*}
\endproof
Observe that, according to Proposition \ref{proplorenzo1}, if $G_K$ has grid number $1$, then $\widehat{HFK} (K) = H(C(G_K),\partial)$, 
as a consequence $\widehat{HFK} (-K)$ can be achieved straightly from $\widehat{HFK} (K)$.
\begin{ese}
\label{eseorient}
 In the Appendix, we compute the Knot Floer Homology of both the knots $K_1$ and $-K_1$ depicted in   Figure \ref{CE1grid}, using a grid diagram with grid number $1$.  We obtain
\begin{align*}
& \widehat{HFK} (K_1) \cong \mathbb{Z}_2 [0,-\dfrac{2}{5} , -\dfrac{1}{5}] \oplus \mathbb{Z}_2 [1,-\dfrac{2}{5} , -\dfrac{2}{5}] \oplus \mathbb{Z}_2 [2,\dfrac{2}{5} , \dfrac{2}{5}] \oplus \mathbb{Z}_2 [3,0 , \dfrac{1}{5}] \oplus \\
& \oplus \mathbb{Z}_2 [4,\dfrac{2}{5} , 0]
\end{align*}
and 
\begin{align*}
& \widehat{HFK} (-K_1) \cong \mathbb{Z}_2 [2,0,\dfrac{1}{5}]
\mathbb{Z}_2 [3 ,\dfrac{2}{5}, \dfrac{2}{5}] \oplus 
\mathbb{Z}_2 [4,-\dfrac{2}{5} , -\dfrac{2}{5}]\oplus
 \mathbb{Z}_2 [0,-\dfrac{2}{5} , -\dfrac{1}{5}]\\&\  \oplus \mathbb{Z}_2 [1,\dfrac{2}{5} , 0]
\end{align*}
where $\mathbb{Z}_2 [i, j, k]$ 
denotes a $\mathbb{Z}_2$-vector space spanned by a generator  with spin degree $i$, Maslov degree $j$ and Alexander degree $k$. This
value of $\widehat{HFK} (-K_1) $ clearly coincides with the one obtained using Proposition \ref{HFLorient} (with $k=2$).
\end{ese}

To end this section we compute $\widehat{HFL}$ on the pairs of links of Examples \ref{perlorenzo} and \ref{perlorenzo2} in order to test whereas 
 this invariant is essential or not. Computations are very long so we report them in
the Appendix, while here we collect only the results, showing that $\widehat{HFL}$  can distinguish both the pairs of links.

\begin{ese}
\label{esHFL}
Let  $K_1$ and $K_2$ be the two non equivalent  knots in $L(5,2)$, depicted in Figure \ref{CE1grid},   both lifting to the trivial knots in $\s3$ (see \cite{M}). 
We have 

\begin{align*}
& \widehat{HFK} (K_1) \cong \mathbb{Z}_2 [0,-\dfrac{2}{5} , -\dfrac{1}{5}] \oplus \mathbb{Z}_2 [1,-\dfrac{2}{5} , -\dfrac{2}{5}] \oplus \mathbb{Z}_2 [2,\dfrac{2}{5} , \dfrac{2}{5}] \oplus \mathbb{Z}_2 [3,0 , \dfrac{1}{5}] \oplus \\
& \oplus \mathbb{Z}_2 [4,\dfrac{2}{5} , 0]
\end{align*}
and
\begin{align*}
&\widehat{HFK} (K_2)  \cong \mathbb{Z}_2 [0,-\dfrac{2}{5} , 0] \oplus \mathbb{Z}_2 [1,-\dfrac{2}{5} , -\dfrac{2}{5}] \oplus 
\mathbb{Z}_2 [2,\dfrac{2}{5} , \dfrac{1}{5}] \oplus \mathbb{Z}_2 [3,0 , -\dfrac{1}{5}] \oplus \\
& \oplus \mathbb{Z}_2 [4,\dfrac{2}{5} , \dfrac{2}{5}].
\end{align*}

\end{ese}

\begin{ese}
\label{esHFL2}
Let $L_A$ and $L_B$ be the two non equivalent links in $L(4,1)$, depicted in  Figure \ref{CE2grid}  both lifting  to the Hopf link  in $\s3$ (see \cite{M}). 
We have
\begin{align*}
& \widehat{HFL} (L_A) \cong  \mathbb{Z}_2 [0,\dfrac{1}{2},\dfrac{1}{2}] \oplus \mathbb{Z}_2 [1,\dfrac{1}{2},-\dfrac{1}{2}] \oplus \mathbb{Z}_2 [2,-\dfrac{1}{2},\dfrac{1}{2}] \oplus \\ 
& \oplus \mathbb{Z}_2 [3,-\dfrac{1}{2},-\dfrac{1}{2}]
\end{align*}
and
\begin{align*}
& \widehat{HFL} (L_B)  \cong H(C(G_B) , \partial) \cong \mathbb{Z}_2 \left[0,\dfrac{1}{4} , (\dfrac{1}{8},\dfrac{1}{8} ) \right] \oplus\mathbb{Z}_2 \left[0,-\dfrac{3}{4} , (-\dfrac{7}{8},\dfrac{1}{8} ) \right] \oplus \\
& \oplus \mathbb{Z}_2 \left[0,-\dfrac{3}{4} , (\dfrac{1}{8},-\dfrac{7}{8} ) \right] \oplus \mathbb{Z}_2 \left[0, -\dfrac{7}{4} , (-\dfrac{7}{8},-\dfrac{7}{8}) \right] \oplus\mathbb{Z}_2 \left[1, 0 , (-\dfrac{5}{8},-\dfrac{1}{8}) \right] \oplus \\ 
& \oplus  \mathbb{Z}_2 \left[1, -1 , (-\dfrac{5}{8},-\dfrac{1}{8}) \right] \oplus \mathbb{Z}_2 \left[2, \dfrac{1}{4} , (-\dfrac{3}{8},-\dfrac{3}{8}) \right] \oplus {\mathbb{Z}_2 \left[2, -\dfrac{3}{4} , (-\dfrac{3}{8},-\dfrac{3}{8}) \right]}^3 \oplus \\ 
& \oplus \mathbb{Z}_2 \left[3, 0 , (-\dfrac{1}{8},-\dfrac{5}{8}) \right] \oplus \mathbb{Z}_2 \left[3,-1 , (-\dfrac{1}{8},-\dfrac{5}{8}) \right]
\end{align*}
where  $\mathbb{Z}_2 [i, j , (k_1,k_2)]$ denotes a $\mathbb{Z}_2$-vector spanned by a generator   with spin degree $i$, Maslov degree $j$ and Alexander bigrading $(k_1,k_2)$.\\

\end{ese}

\end{section}

\begin{section}{Appendix}

This appendix contains the computations of Examples \ref{esHFL} and \ref{esHFL2}.

\paragraph{Maslov index}
First of all we recall the definition of the functions $d,I$ and $\widetilde{W}$ appearing in the formula (\ref{foM}) of the Maslov index 
(see \cite{BGH} for details).
In order to compute these functions it is more easy to keep slanted grid diagrams. 

Let $G=(T^2, \boldsymbol{\alpha} ,\boldsymbol{\beta} ,\mathbb{O} , \mathbb{X})$ be 
a grid diagram representing a link in $L(p,q)$ and let $n$ be its grid number. We denote with $d\colon\mathbb{Z} \times \mathbb{Z} \times \mathbb{Z} \rightarrow \mathbb{Q} $ be the function defined by induction as
\begin{align*}
d(1,0,0)=0  \qquad \qquad \qquad \qquad \qquad \qquad \ \quad \qquad \\
d(p,q,i)= \left( \dfrac{pq - {(2i +1 -p-q)}^2}{4pq} \right) - d(q,r,j)
\end{align*}
where $r \equiv p \mod q$ and $j \equiv i \mod q$.
Consider the function
\begin{align*}
   W\colon \left\{ \begin{array}{l}
        \textrm{Finite set of} \\
         \textrm{points in $G$}
   \end{array}\right\} \rightarrow 
       \left\{ \begin{array}{l}
       \textrm{Finite set of pairs $(a,b)$} \\
        \textrm{with $a\in [0,pn) , b\in [0,n)$}
    \end{array}\right\} 
\end{align*}
that associates to a $n$-ple of  points of  $G$  their coordinates in $\mathbb{R}^2$ with respect to the base
\begin{align*}
 \left( \begin{array}{l}
      \vec{v}_1 = \left(\dfrac{1}{np}, 0 \right) ,  \vec{v}_2 = \left(- \dfrac{q}{np}, \dfrac{1}{n} \right)    \end{array}\right).
\end{align*}
Assume that the points of $\mathbb{O}$ and $\mathbb{X}$ are placed in the centre of their respective parallelograms. 
In this way, with respect to the basis $(\vec{v}_1, \vec{v}_2)$,the generators $\textbf{x}$ have integer coordinates, 
whereas the  points of $\mathbb{O}$ and $\mathbb{X}$ have rational coordinates.
Now define the function
\begin{align*}
    C_{p,q}\colon \left\{ \begin{array}{l}
        \textrm{Finite sets of pairs (a,b)} \\
         \textrm{where} \ a \in [0,pn), b \in [0,n)  
    \end{array}\right\} \rightarrow 
        \left\{ \begin{array}{l}
        \textrm{Finite sets of pairs $(a,b)$} \\
            \textrm{where \ $a,b \in [0,pn)$}
   \end{array}\right\} 
\end{align*}
that, to a $n$-uple of coordinates 
\begin{eqnarray*}
\left( (a_i , b_i ) \right)_{i=0} ^{n-1}
\end{eqnarray*}
associates a $pn$-uple of coordinates
\begin{eqnarray*}
(a_i +nqk \mod np, b_i + nk)_{i=0,k=0} ^{i=n-1,k=p-1} .
\end{eqnarray*}
Let $A$ and $B$ be two finite sets of pairs of coordinates and  $I$ be the function that, 
to a ordinate pair $(A,B)$, associates the cardinality of the set of the pairs $(a,b) \in A\times B$, $a=(a_1,a_2) \in A , \ b=(b_1,b_2) \in B$, 
such that $a_i <b_i$ for $i=1,2$. Define  $\widetilde{W}:= C_{p,q} \circ W$.

\paragraph{Computation of examples \ref{eseorient} and \ref{esHFL} }
To compute the Link Floer Homology of the oriented knots $K_1,K_2\subset L(5,2)$ depicted in Figure  \ref{CE1grid}, 
we use the slanted grid diagrams $G_1$ and $G_2$ depicted in Figure 19.
\begin {figure}[ht]
\label{fin1}
\begin{center}
\includegraphics[width=5in]{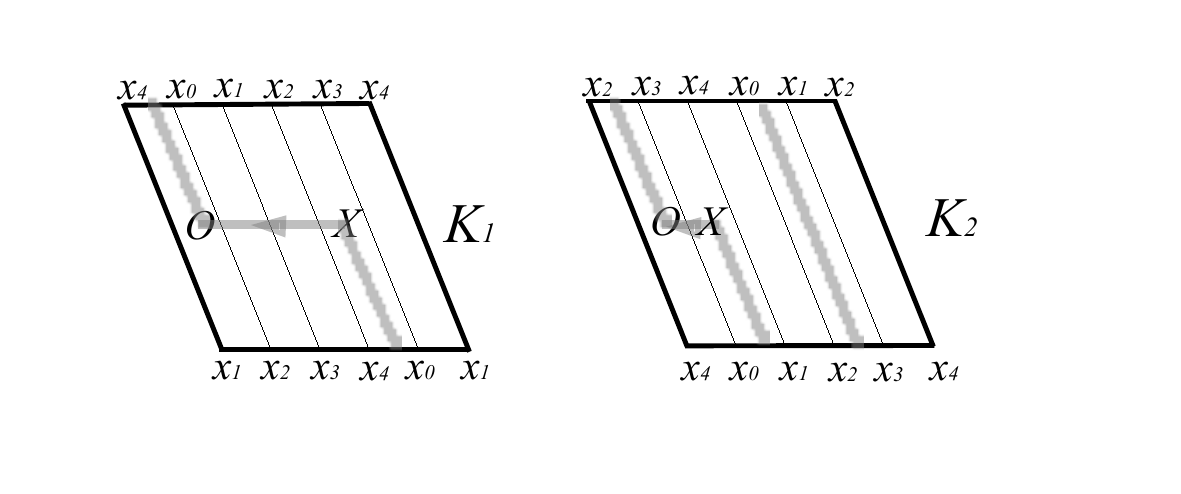}
\caption {\textnormal{Grid diagrams $G_1$ and $G_2$.}}
\end{center}
\end{figure}
Both the of generators  of both $C(G_1)$ and  $C(G_2)$ are in one to one correspondence with $S_1 \times \mathbb{Z}_5$ 
and hence they consist of five elements that we denote with $\big\{ \{[0] ,(0)\}, \{[0] ,(1)\},\{[0] ,(2)\},\{[0] ,(3)\},\{[0] ,(4)\} \big\}$.

Let us compute the spin degree of each generator.
First, observe that for both $G_1$ and $G_2$, we have that $\{[0] ,(0)\}=x_{\mathbb{O}}$. So, from formula (\ref{foS}), we get
 \begin{eqnarray*}
 \mathbf{S}(\{[0] ,(i)\}) \equiv [ 2-1 + (i-0)] \equiv 0 \mod 5, \ \forall i=0,\ldots, 4.
 \end{eqnarray*}
Then, if  $x_i$ denotes the generator having spin degree $i$, we have
 \begin{eqnarray*}
 x_0 := \{[0] ,(4)\}, \ x_1 := \{[0] ,(0)\}, \ x_2 := \{[0] ,(1)\}, \ x_3 := \{[0] ,(2)\}, \ x_4 := \{[0] ,(3)\}.
 \end{eqnarray*}

Now we deal with  the Maslov degree  starting by computing the values assumed by the function $d$
\begin{align*}
d(p,q,q-1)=d(5,2,1)=\left( \dfrac{10- {(2+1-5-2)}^2}{4 \cdot 5 \cdot 2} \right) - d(2,1,1) = \\
=-\dfrac{6}{40} - d(2,1,1) = -\dfrac{3}{20} - \left( \dfrac{2- {(2+1-2-1)}^2}{4 \cdot 2 \cdot 1} \right) - d(1,0,0)= \\
=-\dfrac{3}{20}-\dfrac{1}{4}-0=-\dfrac{2}{5}. \quad \qquad \qquad \qquad \qquad \qquad \qquad \qquad \qquad \qquad 
 \end{align*}

Denote with $Z$ the $X$ marking of $G_2$. The elements $x_i , O, X$ and $Z$, considered as points of the unitary square $[0,1]\times[0,1]$, have the following coordinates with respect to the canonical basis
\vspace{-0.2in}
\begin{center}
\begin{tabular}{ccccc}
$x_0= \left( \dfrac{4}{5},0 \right)$ & $x_1= \left( 0,0 \right)$ & $x_2= \left( \dfrac{1}{5},0 \right)$ & $x_3= \left( \dfrac{2}{5},0 \right)$ & $x_4= \left( \dfrac{3}{5},0 \right)$\\
\rule[-4mm]{0mm}{1.2cm}
$O=\left( \dfrac{9}{10}, \dfrac{1}{2} \right)$ & $X=\left( \dfrac{1}{2}, \dfrac{1}{2} \right)$ & $Z=\left( \dfrac{1}{10}, \dfrac{1}{2} \right)$. & &
\end{tabular}
\end{center}
The grid diagrams $G_1$ and $G_2$ have grid number $n=1$, so, switching to the basis 
 $\left( \vec{v}_1 = \left(\dfrac{1}{np}, 0 \right)=\left(\dfrac{1}{5}, 0 \right) ,  \vec{v}_2 =  \left(-\dfrac{q}{np}, \dfrac{1}{n} \right)=\left(- \dfrac{2}{5}, 1 \right) \right)$
 by means of the basis change matrix  $W= \left( \begin{matrix}  np&nq\\ 0&n \end{matrix} \right)=\left( \begin{matrix}  5&2\\ 0&1 \end{matrix} \right)$, we get the 
 following values.

\begin{center}
\begin{tabular}{|c|c|c|c|c|c|c|c|c|} 
\hline
& $x_0$ & $x_1$ & $x_2$ & $x_3$ & $x_4$ & $O$ & $X$ & $Z$ \\
\hline
\rule[-4mm]{0mm}{1cm}
W(column) &  $\left( 4,0 \right)$ & $\left( 0,0 \right)$ & $\left( 1,0 \right)$ & $\left( 2,0 \right)$ & $\left( 3,0 \right)$ &
$\left( \dfrac{11}{2}, \dfrac{1}{2} \right)$ & $\left( \dfrac{7}{2}, \dfrac{1}{2} \right)$ & $\left( \dfrac{3}{2}, \dfrac{1}{2} \right)$\\
\hline
\end{tabular}
\end{center}
By composing with the function $C_{5,2}$ we have
\vspace{-0.7cm}
\begin{footnotesize}
\begin{center}
\begin{tabular}{c}
$\widetilde{W}(x_0)$ = $( \left( 4 \negthickspace \negthickspace \mod 5,0 \right), \left( 1 \negthickspace \negthickspace \mod 5,1 \right), \left( 3 \negthickspace \negthickspace \mod 5,2 \right),\left( 0 \negthickspace \negthickspace \mod 5,3 \right),\left( 2 \negthickspace \negthickspace \mod 5,4 \right) )$ \\
$\widetilde{W}(x_1)$ =  $( \left( 0  \negthickspace \negthickspace \mod 5,0 \right), \left( 2  \negthickspace \negthickspace  \mod 5,1 \right), \left( 4  \negthickspace \negthickspace  \mod 5,2 \right),\left( 1  \negthickspace \negthickspace  \mod 5,3 \right),\left( 3  \negthickspace \negthickspace  \mod 5,4 \right) )$ \\
$\widetilde{W}(x_2)$ =  $( \left( 1  \negthickspace \negthickspace  \mod 5,0 \right), \left( 3  \negthickspace \negthickspace  \mod 5,1 \right), \left( 0  \negthickspace \negthickspace  \mod 5,2 \right),\left( 2  \negthickspace \negthickspace  \mod 5,3 \right),\left( 4  \negthickspace \negthickspace  \mod 5,4 \right) )$ \\
$\widetilde{W}(x_3)$ =  $( \left( 2  \negthickspace \negthickspace  \mod 5,0 \right), \left( 4  \negthickspace \negthickspace  \mod 5,1 \right), \left( 1  \negthickspace \negthickspace  \mod 5,2 \right),\left( 3  \negthickspace \negthickspace  \mod 5,3 \right),\left( 0  \negthickspace \negthickspace  \mod 5,4 \right) )$\\
$\widetilde{W}(x_4)$ =  $( \left( 3  \negthickspace \negthickspace  \mod 5,0 \right), \left( 0  \negthickspace \negthickspace  \mod 5,1 \right), \left( 2  \negthickspace \negthickspace  \mod 5,2 \right),\left( 4  \negthickspace \negthickspace  \mod 5,3 \right),\left( 1  \negthickspace \negthickspace  \mod 5,4 \right) )$ \\
\rule[-4mm]{0mm}{1cm}
$\widetilde{W}(O)$ = $\Big( \left( \dfrac{1}{2}  \negthickspace \negthickspace  \mod 5,\dfrac{1}{2} \right), \left( \dfrac{5}{2}  \negthickspace \negthickspace  \mod 5,\dfrac{3}{2} \right), \left( \dfrac{9}{2}  \negthickspace \negthickspace  \mod 5,\dfrac{5}{2} \right), \left( \dfrac{3}{2}  \negthickspace \negthickspace  \mod 5,\dfrac{7}{2} \right),\left( \dfrac{7}{2} \negthickspace \negthickspace  \mod 5,\dfrac{9}{2} \right) \Big)$    \\
\rule[-4mm]{0mm}{1cm}
$\widetilde{W}(X)$ = $\Big( \left( \dfrac{7}{2}  \negthickspace \negthickspace \mod 5,\dfrac{1}{2} \right), \left( \dfrac{1}{2}  \negthickspace \negthickspace \mod 5,\dfrac{3}{2} \right), \left( \dfrac{5}{2}  \negthickspace \negthickspace \mod 5,\dfrac{5}{2} \right), \left( \dfrac{9}{2}  \negthickspace \negthickspace \mod 5,\dfrac{7}{2} \right),\left( \dfrac{3}{2}  \negthickspace \negthickspace \mod 5,\dfrac{9}{2} \right) \Big)$ \\
\rule[-4mm]{0mm}{1cm}
$\widetilde{W}(Z)$ =  $\Big( \left( \dfrac{3}{2}  \negthickspace \negthickspace \mod 5,\dfrac{1}{2} \right), \left( \dfrac{7}{2}  \negthickspace \negthickspace \mod 5,\dfrac{3}{2} \right), \left( \dfrac{1}{2}  \negthickspace \negthickspace \mod 5,\dfrac{5}{2} \right), \left( \dfrac{5}{2}  \negthickspace \negthickspace \mod 5,\dfrac{7}{2} \right),\left( \dfrac{9}{2}  \negthickspace \negthickspace \mod 5,\dfrac{9}{2} \right) \Big)$\\
\end{tabular}
\end{center}
\end{footnotesize}
and so we obtain

\fontfamily{phv} \selectfont
\begin{center}
\begin{tabular}{|c|c|c|c|c|c|c|c|c|}
\hline 
 $J(row,column)$ & $x_0$ & $x_1$ & $x_2$ & $x_3$ & $x_4$ & $O$ & $X$ & $Z$ \\
\hline
 $x_0$ &  $3$ &   &   &   &   & $10$ & $7$ & $8$ \\
\hline
$x_1$ &  & $7$  &   &   &  & $12$ & $8$ & $8$ \\
\hline
$x_2$ &  &  &  $7$ &   &  & $10$ & $10$ & $9$ \\
\hline
$x_3$ &   &   &   &  $3$ &  & $9$ & $8$ & $6$ \\
\hline
$x_4$ &   &   &   &  &  $5$ & $9$ & $7$ & $9$ \\
\hline
$O$ &  $5$ & $7$  &  $5$ &  $4$ & $4$ & $7$ &  & \\
\hline
$X$ &  $2$ & $3$  &  $5$ &  $3$ & $2$ &  & $3$ & \\
\hline
$Z$ &  $3$ & $3$  &  $4$ &  $1$ & $4$ &  &  & $3$ \\
\hline
\end{tabular}
\end{center}
\normalfont
where with $J(Y_1,Y_2)$ we denote the function $I (\widetilde{W}(Y_1),\widetilde{W}(Y_2))$.
Finally, from  formula (\ref{foM}) we get
\begin{align*}
\textbf{M}(x_0)=\mathbf{M_{\mathbb{O}}}(x_0)=\dfrac{1}{5} ( 3-10-5+7+1)-\dfrac{2}{5}+\dfrac{4}{5}=-\dfrac{2}{5}\\
\textbf{M}(x_1)=\mathbf{M_{\mathbb{O}}}(x_1)=\dfrac{1}{5} ( 7-12-7+7+1)-\dfrac{2}{5}+\dfrac{4}{5}=-\dfrac{2}{5} \\
\textbf{M}(x_2)=\mathbf{M_{\mathbb{O}}}(x_2)=\dfrac{1}{5} ( 7-10-5+7+1)-\dfrac{2}{5}+\dfrac{4}{5}=\dfrac{2}{5}\ \ \\
\textbf{M}(x_3)=\mathbf{M_{\mathbb{O}}}(x_3)=\dfrac{1}{5} ( 3-9-4+7+1)-\dfrac{2}{5}+\dfrac{4}{5}=0 \ \ \ \ \\
\textbf{M}(x_4)=\mathbf{M_{\mathbb{O}}}(x_4)=\dfrac{1}{5} ( 5-9-4+7+1)-\dfrac{2}{5}+\dfrac{4}{5}=\dfrac{2}{5}. \ \
\end{align*}
Observe that, since $O$ lies in the same cell for both $G_1$ and $G_2$, the Maslov degree is the same for generators of $C(G_1)$ and $C(G_2)$.

Similarly, we compute the Maslov degree with respect to $\mathbb{X}$ and to $\mathbb{Z}$ obtaining
\begin{align*}
& \mathbf{M_{\mathbb{X}}}(x_0)=\dfrac{1}{5} ( 3-7-2+3+1)-\dfrac{2}{5}+\dfrac{4}{5}=0\\
& \mathbf{M_{\mathbb{X}}}(x_1)=\dfrac{1}{5} ( 7-8-3+3+1)-\dfrac{2}{5}+\dfrac{4}{5}=\dfrac{2}{5}\\
& \mathbf{M_{\mathbb{X}}}(x_2)=\dfrac{1}{5} ( 7-10-5+3+1)-\dfrac{2}{5}+\dfrac{4}{5}=-\dfrac{2}{5}\\
& \mathbf{M_{\mathbb{X}}}(x_3)=\dfrac{1}{5} ( 3-8-3+3+1)-\dfrac{2}{5}+\dfrac{4}{5}=-\dfrac{2}{5}\\
& \mathbf{M_{\mathbb{X}}}(x_4)=\dfrac{1}{5} ( 5-7-2+3+1)-\dfrac{2}{5}+\dfrac{4}{5}=\dfrac{2}{5}\\
& \mathbf{M_{\mathbb{Z}}}(x_0)=\dfrac{1}{5} ( 3-8-3+3+1)-\dfrac{2}{5}+\dfrac{4}{5}=-\dfrac{2}{5}\\
& \mathbf{M_{\mathbb{Z}}}(x_1)=\dfrac{1}{5} ( 7-8-3+3+1)-\dfrac{2}{5}+\dfrac{4}{5}=\dfrac{2}{5}\\
& \mathbf{M_{\mathbb{Z}}}(x_2)=\dfrac{1}{5} ( 7-9-4+3+1)-\dfrac{2}{5}+\dfrac{4}{5}=0\\
& \mathbf{M_{\mathbb{Z}}}(x_3)=\dfrac{1}{5} ( 3-6-1+3+1)-\dfrac{2}{5}+\dfrac{4}{5}=\dfrac{2}{5}\\
& \mathbf{M_{\mathbb{Z}}}(x_4)=\dfrac{1}{5} ( 5-9-4+3+1)-\dfrac{2}{5}+\dfrac{4}{5}=-\dfrac{2}{5}.
\end{align*}

Now, using formula (\ref{foA}),  we can compute,  on one hand, the Alexander degree of $C(G_1)$ generators
\begin{align*}
& \mathbf{A} (x_0) = \dfrac{1}{2}( \mathbf{M_{\mathbb{O}}}(x_0) - \mathbf{M_{\mathbb{X}}}(x_0) -(n-1) )=\dfrac{1}{2} (-\dfrac{2}{5}-0-0)=-\dfrac{1}{5}\\
& \mathbf{A} (x_1) =\dfrac{1}{2} (-\dfrac{2}{5}-\dfrac{2}{5})=-\dfrac{2}{5}\\
& \mathbf{A} (x_2) =\dfrac{1}{2} (\dfrac{2}{5}+\dfrac{2}{5})=\dfrac{2}{5}\\
& \mathbf{A} (x_3) =\dfrac{1}{2} (0+\dfrac{2}{5})=\dfrac{1}{5}\\
& \mathbf{A} (x_4) =\dfrac{1}{2} (\dfrac{2}{5}-\dfrac{2}{5})=0,
\end{align*}
and, on the other hand, the  Alexander degree of $C(G_2)$ generators
\begin{align*}
& \mathbf{A} (x_0)=\dfrac{1}{2} (-\dfrac{2}{5}+\dfrac{2}{5}-0)=0\\
& \mathbf{A} (x_1) =\dfrac{1}{2} (-\dfrac{2}{5}-\dfrac{2}{5})=-\dfrac{2}{5}\\
& \mathbf{A} (x_2) =\dfrac{1}{2} (\dfrac{2}{5}+0)=\dfrac{1}{5}\\
& \mathbf{A} (x_3) =\dfrac{1}{2} (0-\dfrac{2}{5})=-\dfrac{1}{5}\\
& \mathbf{A} (x_4) =\dfrac{1}{2} (\dfrac{2}{5}+\dfrac{2}{5})=\dfrac{2}{5}.
\end{align*}

In the following tables we resume the  Maslov and Alexander degrees of  the generators.
\begin{center}
\begin{tabular}{|c|c|c|c|c|c|c|}
\hline 
 $C(G_1)$ & $x_0$ & $x_1$ & $x_2$ & $x_3$ & $x_4$ \\
\hline
\rule[-4mm]{0mm}{1cm} 
$\mathbf{M}$ & $-\dfrac{2}{5}$ &  $-\dfrac{2}{5}$ &  $\dfrac{2}{5}$ &  $0$ &  $\dfrac{2}{5}$  \\
\hline
\rule[-4mm]{0mm}{1cm}
 $\mathbf{A}$ & $-\dfrac{1}{5}$ &  $-\dfrac{2}{5}$ &  $\dfrac{2}{5}$ &  $\dfrac{1}{5}$  &  $0$ \\
 \hline
\end{tabular} 
\hspace{1cm}
\begin{tabular}{|c|c|c|c|c|c|c|}
\hline 
 $C(G_2)$ & $x_0$ & $x_1$ & $x_2$ & $x_3$ & $x_4$ \\
\hline
\rule[-4mm]{0mm}{1cm} 
$\mathbf{M}$ & $-\dfrac{2}{5}$ &  $-\dfrac{2}{5}$ &  $\dfrac{2}{5}$ &  $0$ &  $\dfrac{2}{5}$  \\
\hline
\rule[-4mm]{0mm}{1cm}
 $\mathbf{A}$ & $0$ &  $-\dfrac{2}{5}$ &  $\dfrac{1}{5}$ &  $-\dfrac{1}{5}$  &  $\dfrac{2}{5}$  \\
 \hline
\end{tabular}
\end{center}

After computing the three degrees for each generator, we look for pairs of generators connected by the boundary operator.
Since the  generators of $C(G_1)$ have different spin degree and,
by formula (\ref{fod}), the boundary operator $\partial$ preserves the spin degree,
 there is no  connection via boundary operator between the five generators. 
  This means that  
 each  generator of $C(G_1)$   is a generator of 
$H(C(G_1), \partial)$. Hence, by Proposition \ref{HFK}, we get

\begin{eqnarray*}
  \widehat{HFK} (K_1) \cong H(C(G_1), \partial) &\cong& \mathbb{Z}_2 [0,-\dfrac{2}{5} , -\dfrac{1}{5}] \oplus \mathbb{Z}_2 [1,-\dfrac{2}{5} , 
 -\dfrac{2}{5}] \oplus \mathbb{Z}_2 [2,\dfrac{2}{5} , \dfrac{2}{5}]\oplus \\ &\ & \oplus \mathbb{Z}_2 [3,0 , \dfrac{1}{5}] \oplus \mathbb{Z}_2 [4,\dfrac{2}{5} , 0],
\end{eqnarray*}
where $\mathbb{Z}_2 [i, j, k]$ denotes a $\mathbb{Z}_2$-vector space embedded with spin degree $i$, Maslov degree $j$ and Alexander degree $k$.

Similarly we get 
\begin{eqnarray*}
 \widehat{HFK} (K_1)\cong H(C(G_2), \partial) &\cong& \mathbb{Z}_2 [0,-\dfrac{2}{5} , 0] \oplus \mathbb{Z}_2 [1,-\dfrac{2}{5} , 
 -\dfrac{2}{5}] \oplus \mathbb{Z}_2 [2,\dfrac{2}{5} , \dfrac{1}{5}] \oplus\\ &\ & \oplus \mathbb{Z}_2 [3,0 , -\dfrac{1}{5}] \oplus 
 \mathbb{Z}_2 [4,\dfrac{2}{5} , \dfrac{2}{5}].
\end{eqnarray*}

By a straightforward computation we obtain 
\begin{eqnarray*}
 \widehat{HFK} (-K_1) &\cong& \mathbb{Z}_2 [0,-\dfrac{2}{5} , -\dfrac{1}{5}] \oplus \mathbb{Z}_2 [1,\dfrac{2}{5} , 0]
\oplus \mathbb{Z}_2 [2,0,\dfrac{1}{5}] \oplus\\ &\ &\oplus \mathbb{Z}_2 [3 ,\dfrac{2}{5}, \dfrac{2}{5}] 
 \oplus  \mathbb{Z}_2 [4,-\dfrac{2}{5} , -\dfrac{2}{5}],
\end{eqnarray*}

as predicted by Proposition \ref{HFLorient}.
 
\paragraph{Computation of example \ref{esHFL}}
We compute the Link Floer Homology of the links $L_A,L_B\subset L(4,1)$ of Figure  \ref{CE2grid}. Using the grid diagram 
with grid number one depicted in the figure, the computations for the knot $L_A$
are really similar to the ones done in the previous example. We get 
\begin{eqnarray*}
 \widehat{HFL} (L_A) &\cong&  
 \mathbb{Z}_2 [0,\dfrac{1}{2},\dfrac{1}{2}] \oplus \mathbb{Z}_2 [1,\dfrac{1}{2},-\dfrac{1}{2}] \oplus \mathbb{Z}_2 [2,-\dfrac{1}{2},\dfrac{1}{2}] \oplus \\ 
&\ & \oplus \mathbb{Z}_2 [3,-\dfrac{1}{2},-\dfrac{1}{2}].
\end{eqnarray*}
Much more work is necessary to compute the Link Floer Homology of the two components link $L_B$.
\begin {figure}[htb]
\begin{center}
\includegraphics[width=6in]{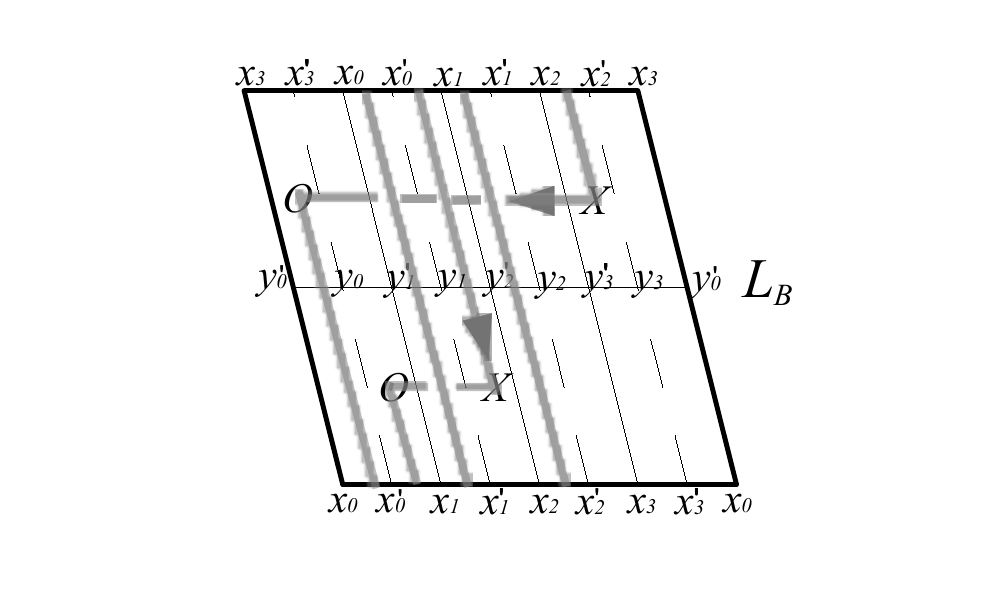}\label{figfinale}
\caption {\textnormal{Grid diagram $G_B$ for the link $L_B\subset L(4,1)$.}}
\end{center}
\end{figure}
Referring to Figure 20,  we identify the set of the  generators of $C(G_B)$ with
the set composed by pairs $x_i y_j$ and by pairs $x_i \textquoteright y_j \textquoteright$ for $i,j \in \{1, \ldots , p \}$.
By formula (\ref{foS}), we get  
$$\mathbf{S} (x_i y_j) \equiv i+j \equiv \mathbf{S} (x_i \textquoteright y_j \textquoteright)  \mod p.$$

Implementing the algorithm described in the first paragraph of the Appendix, we can use a calculator to compute both the Maslov and the Alexander degrees of the generators of $C(G_B)$. The results
are contained in the  four tables starting at  page \pageref{tavole}.

Once we have computed all the three degrees, we want to establish which generators are connected by the boundary operator.
From formula (\ref{fod}), 
two generators  of $C(G_B)$, may be connected by the boundary operator if they have the same spin and Alexander degrees and if their Maslov degrees differ by 1. 
Given two such generators  $\mathbf{x}$ and $\mathbf{y}$, 
consider the set $N_{\mathbf{x},\mathbf{y}}$ of admissible parallelograms connecting them and  containing neither
$O$ nor $X$. We have that the boundary operator connects $\mathbf{x}$ and $\mathbf{y}$ if and only if 
$\# (N_{\mathbf{x},\mathbf{y}}) \negthickspace \equiv 1 \mod 2$.

\begin{landscape}
~
\vfill

\label{tavole}
\begin{table}[h]
\begin{center}
\fontfamily{phv} \selectfont
\begin{tabular}{|c|c|c|c|c|c|c|c|c|c|}
\hline 
 & A$_0$ & B$_0$ & C$_0$ & D$_0$ & E$_0$ & F$_0$ & G$_0$ & H$_0$ \\
\hline
 $\mathbf{S} =0$ & $x_0 y_0$ & $x_0 \textquoteright  y_0 \textquoteright$  & $x_1 y_3$ & $x_1 \textquoteright  y_3 \textquoteright$  & $x_2 y_2$ & $x_2 \textquoteright  y_2 \textquoteright$  & $x_3 y_1$ & $x_3 \textquoteright  y_1 \textquoteright$  \\
\hline
\rule[-4mm]{0mm}{1cm} 
$\mathbf{M}$ & $-\dfrac{3}{4}$ &  $-\dfrac{7}{4}$ &  $-\dfrac{3}{4}$ &  $\dfrac{1}{4}$ &  $\dfrac{5}{4}$ &  $\dfrac{1}{4}$  &  $-\dfrac{3}{4}$ &  $-\dfrac{7}{4}$ \\
\hline
\rule[-4mm]{0mm}{1cm}
 $\mathbf{A}$ & $(-\dfrac{7}{8} , - \dfrac{7}{8})$ & $(-\dfrac{7}{8} , - \dfrac{7}{8})$ & $(-\dfrac{7}{8} , \dfrac{1}{8})$ & $(\dfrac{1}{8} , \dfrac{1}{8})$  & $(\dfrac{1}{8} , \dfrac{1}{8})$  & $(\dfrac{1}{8} , \dfrac{1}{8})$ & $(\dfrac{1}{8} , - \dfrac{7}{8})$ & $(-\dfrac{7}{8} , - \dfrac{7}{8})$  \\
\hline
\end{tabular}
\end{center}
\caption{Generators of $C(G_B)$  having spin degree $0$.}
\end{table}
\normalfont

\vfill
\begin{table}[h]
\begin{center}
\fontfamily{phv} \selectfont
\begin{tabular}{|c|c|c|c|c|c|c|c|c|c|}
\hline 
 & A$_1$ & B$_1$ & C$_1$ & D$_1$ & E$_1$ & F$_1$ & G$_1$ & H$_1$ \\
\hline
 $\mathbf{S} =1$ & $x_0 y_1$ & $x_0 \textquoteright  y_1 \textquoteright$  & $x_1 y_0$ & $x_1 \textquoteright  y_0 \textquoteright$  & $x_2 y_3$ & $x_2 \textquoteright  y_3 \textquoteright$  & $x_3 y_2$ & $x_3 \textquoteright  y_2 \textquoteright$  \\
\hline 
$\mathbf{M}$ & $-2$ & $-1$ & $0$ & $-1$ & $0$ & $1$  & $0$ & $-1$ \\
\hline
\rule[-4mm]{0mm}{1cm}
 $\mathbf{A}$ &  $(-\dfrac{5}{8} , -\dfrac{9}{8})$ &  $(-\dfrac{5}{8} , -\dfrac{9}{8})$ &  $(-\dfrac{5}{8} , -\dfrac{1}{8})$ &  $ (-\dfrac{5}{8} , -\dfrac{1}{8})$  & $(-\dfrac{5}{8} , -\dfrac{1}{8})$  &  $(\dfrac{3}{8} , -\dfrac{1}{8})$ &  $(\dfrac{3}{8} , -\dfrac{1}{8})$ &  $(-\dfrac{5}{8} , -\dfrac{1}{8})$  \\
\hline
\end{tabular}
\end{center}
\caption{Generators of $C(G_B)$  having spin degree $1$.}
\end{table}
\normalfont
\vfill

\begin{table}[h]
\begin{center}
\fontfamily{phv} \selectfont
\begin{tabular}{|c|c|c|c|c|c|c|c|c|c|}
\hline 
 & A$_2$ & B$_2$ & C$_2$ & D$_2$ & E$_2$ & F$_2$ & G$_2$ & H$_2$  \\
\hline
 $\mathbf{S} =2$ & $x_0 y_2$ & $x_0 \textquoteright  y_2 \textquoteright$  & $x_1 y_1$ & $x_1 \textquoteright  y_1 \textquoteright$  & $x_2 y_0$ & $x_2 \textquoteright  y_0 \textquoteright$  & $x_3 y_3$ & $x_3 \textquoteright  y_3 \textquoteright$  \\
\hline 
\rule[-4mm]{0mm}{1cm}
$\mathbf{M}$ & $-\dfrac{7}{4}$ &  $-\dfrac{3}{4}$ &  $\dfrac{1}{4}$ &  $-\dfrac{3}{4}$ &  $\dfrac{1}{4}$ &  $-\dfrac{3}{4}$  &  $\dfrac{1}{4}$ &  $-\dfrac{3}{4}$ \\
\hline
\rule[-4mm]{0mm}{1cm}
 $\mathbf{A}$ & $(-\dfrac{3}{8} , -\dfrac{3}{8})$ &  $(-\dfrac{3}{8} , -\dfrac{3}{8})$ &  $(-\dfrac{3}{8} , -\dfrac{3}{8})$ &  $(-\dfrac{3}{8} , -\dfrac{3}{8})$  & $(-\dfrac{3}{8} , -\dfrac{3}{8})$ &  $(-\dfrac{3}{8} , -\dfrac{3}{8})$ &  $(-\dfrac{3}{8} , -\dfrac{3}{8})$ &  $(-\dfrac{3}{8} , -\dfrac{3}{8})$  \\
\hline
\end{tabular}
\end{center}
\caption{Generators of $C(G_B)$  having spin degree $2$.}
\end{table}
\normalfont
\vfill

\begin{table}[h]
\begin{center}
\fontfamily{phv} \selectfont
\begin{tabular}{|c|c|c|c|c|c|c|c|c|c|}
\hline 
 & A$_3$ & B$_3$ & C$_3$ & D$_3$ & E$_3$ & F$_3$ & G$_3$ & H$_3$ \\
\hline
 $\mathbf{S} =3$ & $x_0 y_3$ & $x_0 \textquoteright  y_3 \textquoteright$  & $x_1 y_2$ & $x_1 \textquoteright  y_2 \textquoteright$  & $x_2 y_1$ & $x_2 \textquoteright  y_1 \textquoteright$  & $x_3 y_0$ & $x_3 \textquoteright  y_0 \textquoteright$  \\
\hline 
$\mathbf{M}$ & $-2$ & $-1$ & $0$ & $1$ & $0$ & $-1$  & $0$ & $-1$ \\
\hline
\rule[-4mm]{0mm}{1cm}
 $\mathbf{A}$ & $(-\dfrac{9}{8} , -\dfrac{5}{8})$ &  $(-\dfrac{1}{8} , -\dfrac{5}{8})$ &  $(-\dfrac{1}{8} , \dfrac{3}{8})$ &  $(-\dfrac{1}{8} , \dfrac{3}{8})$  & $(-\dfrac{1}{8} , -\dfrac{5}{8})$ &  $(-\dfrac{1}{8} , -\dfrac{5}{8})$ &  $(-\dfrac{1}{8} , -\dfrac{5}{8})$ &  $(-\dfrac{9}{8} , -\dfrac{5}{8})$  \\
\hline
\end{tabular}
\end{center}
\caption{ Generators of $C(G_B)$   having spin degree $3$.}
\end{table}
\normalfont
\vfill
~
\end{landscape}

In order to compute $H(C(G_B),\partial)$ we study four different cases, depending on the spin degree of the generators analyzed.
Moreover we describe the behavior of the boundary operator using a diagram,  with the following conventions  
\begin{itemize}
 \item  if $\partial(Z)=0$,  no arrow will start from the generator $Z$  of $C(G_B)$;
 \item if $\partial(Z)=Z_1+\cdots+Z_k$ there will be an arrow starting from $Z$ and ending in $Z_i$, for $i=1,\ldots,k$.
\end{itemize}

Let us start from spin degree $0$.  We have\

\begin {figure}[h!]
\begin{center}
 \includegraphics[width=4in]{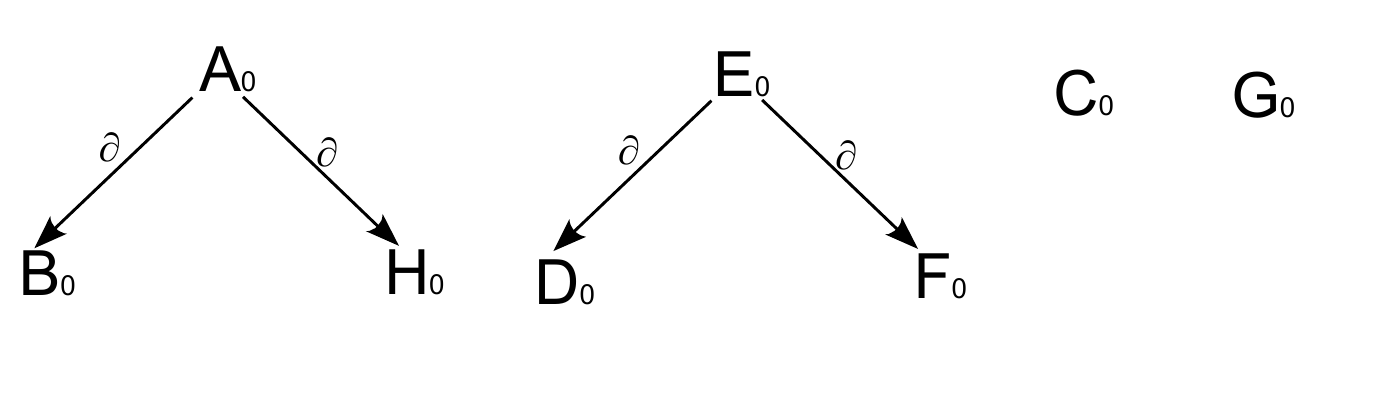}
\end{center}
\end{figure} \vspace{0.01in}

so {\fontfamily{phv} \selectfont B$_0$, H$_0$, D$_0$, F$_0$, C$_0$} and 
{\fontfamily{phv} \selectfont G$_0$} are cycles and are not the boundary of any chain. The cycle  
{\fontfamily{phv} \selectfont B$_0$} is equivalent to {\fontfamily{phv} \selectfont H$_0$},  since their sum is  the boundary of 
{\fontfamily{phv} \selectfont A$_0$}, and, the same holds for  {\fontfamily{phv} \selectfont D$_0$} and {\fontfamily{phv} \selectfont F$_0$}.
Then  {\fontfamily{phv} \selectfont B$_0$, C$_0$, G$_0$} and  {\fontfamily{phv} \selectfont D$_0$} {\fontfamily{phv}} are the four generators of
$H(C(G_B), \partial)$ having  spin degree equal to $0$.

For spin degree $1$ we have\\
\begin {figure}[h!]
\begin{center}
 \includegraphics[width=3in]{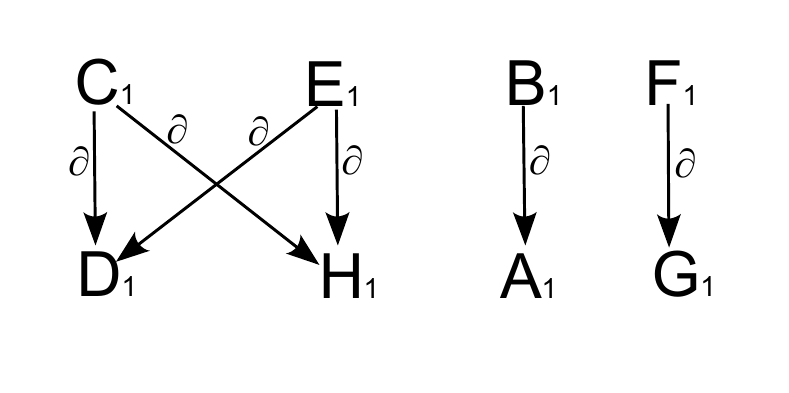}
\end{center}
\end{figure}

so  {\fontfamily{phv} \selectfont D$_1$} (which is equivalent to {\fontfamily{phv} \selectfont H$_1$}) and {\fontfamily{phv} \selectfont C$_1+$E$_1$} are the
two generators of $H(C(G_B), \partial)$ having spin degree  $1$.

For spin degree $2$ we get\\

\begin {figure}[h!]
\begin{center}
 \includegraphics[width=3in]{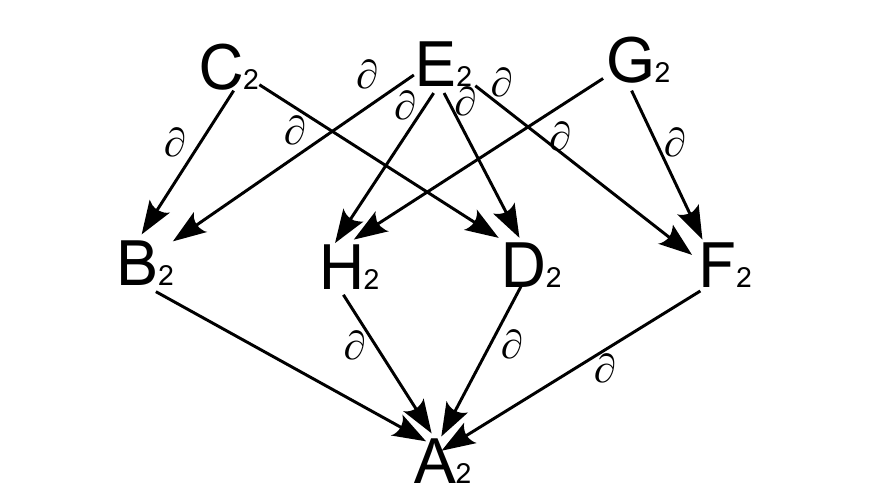}
\end{center}
\end{figure}
so  {\fontfamily{phv} \selectfont C$_2+$E$_2+$G$_2$}, {\fontfamily{phv} \selectfont B$_2+$H$_2$}, {\fontfamily{phv}
\selectfont B$_2+$F$_2$} and {\fontfamily{phv} \selectfont H$_2+$D$_2$} are the four generators of $H(C(G_B), \partial)$ having spin degree $2$.

Finally, for spin degree $3$ we get\\

\begin {figure}[h!]
\begin{center}
 \includegraphics[width=3in]{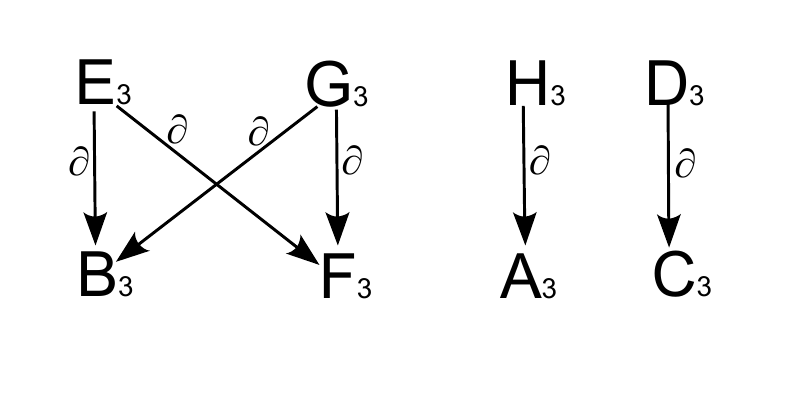}
\end{center}
\end{figure}
so  {\fontfamily{phv} \selectfont B$_3$} (which is equivalent to  {\fontfamily{phv} \selectfont F$_3$}) and {\fontfamily{phv} \selectfont E$_3+$G$_3$}
are the two generators of $H(C(G_B), \partial)$ having spin degree $3$.\\

According to Proposition \ref{proplorenzo1}, we have  $\widehat{HFL} (L_B)  \cong H(C(G_B) , \partial) $ and so
\begin{eqnarray*}
\widehat{HFL} (L_B)  &\cong& \mathbb{Z}_2 \left[0,\dfrac{1}{4} , 
 (\dfrac{1}{8},\dfrac{1}{8} ) \right] \oplus\mathbb{Z}_2 \left[0,-\dfrac{3}{4} , (-\dfrac{7}{8},\dfrac{1}{8} ) \right]  
\oplus \mathbb{Z}_2 \left[0,-\dfrac{3}{4} , (\dfrac{1}{8},-\dfrac{7}{8} ) \right] \oplus\\ &\ & \mathbb{Z}_2 \left[0, -\dfrac{7}{4} ,
(-\dfrac{7}{8},-\dfrac{7}{8}) \right] \oplus\mathbb{Z}_2 \left[1, 0 , (-\dfrac{5}{8},-\dfrac{1}{8}) \right] 
 \oplus  \mathbb{Z}_2 \left[1, -1 , (-\dfrac{5}{8},-\dfrac{1}{8}) \right] \oplus\\ &\ & \oplus \mathbb{Z}_2 \left[2, \dfrac{1}{4} , 
 (-\dfrac{3}{8},-\dfrac{3}{8}) \right] \oplus {\mathbb{Z}_2 \left[2, -\dfrac{3}{4} , (-\dfrac{3}{8},-\dfrac{3}{8}) \right]}^3 \oplus\\
&\ &\oplus \mathbb{Z}_2 \left[3, 0 , (-\dfrac{1}{8},-\dfrac{5}{8}) \right] \oplus \mathbb{Z}_2 \left[3,-1 , (-\dfrac{1}{8},-\dfrac{5}{8}) \right],
\end{eqnarray*}
where $\mathbb{Z}_2 [i, j , (k_1,k_2)]$ indicates a $\mathbb{Z}_2$-vector space generated by an element of spin degree $i$, Maslov degree $j$ and Alexander bigrading $(k_1,k_2)$.\\

\end{section}

\vspace{2mm}
\textit{Acknowledgments:} The authors are grateful to Chris
Cornwell and Ken Baker for helpful discussions and to Ely Grigsby for a program computing HFK of knots in lens spaces with grid diagram at most two.


\vspace{15 pt} {ALESSIA CATTABRIGA, Department of Mathematics,
University of Bologna, ITALY. E-mail: alessia.cattabriga@unibo.it}

\vspace{15 pt} {ENRICO MANFREDI, Department of Mathematics,
University of Bologna, ITALY. E-mail: enrico.manfredi3@unibo.it}

\vspace{15 pt} {LORENZO RIGOLLI, Department of Mathematics,
Ruhr University of Bochum, GERMANY. E-mail: Lorenzo.Rigolli@rub.de}


\end{document}